\theoremstyle{plain}
\newtheorem{thm}{{Theorem}}[section]
\newtheorem{cor}[thm]{Corollary}
\newtheorem{prop}[thm]{Proposition}
\newtheorem{rem}[thm]{Remark}
\newcommand{\e}{\boldsymbol{e}}
\newcommand{\Oh}{\mathcal{O}}
\newcommand{\Ex}{\mathbb{E}}
\newcommand{\Var}{\mathbb{V}}
\newcommand{\Tree}{\mathcal{T}}
\newcommand{\Rec}{\mathcal{R}}
\newcommand{\Bin}{\mathcal{B}}
\newcommand{\rec}{\mathsf{rec}}
\newcommand{\bin}{\mathsf{bin}}
\newcommand{\BT}{\mathfrak{B}}
\newcommand{\RT}{\mathfrak{R}}
\begin{document}

\thispagestyle{plain}

\title{On the distribution of eigenvalues of increasing trees}

\author{Kenneth Dadedzi}
\address{Kenneth Dadedzi\\
Department of Mathematical Sciences\\
University of Ghana\\
Post Office Box LG 62\\ 
Legon, Accra\\
Ghana\\
and 
Department of Mathematical Sciences\\
Stellenbosch University\\
Private Bag X1\\
Matieland 7602\\
South Africa}
\email{kdadedzi@ug.edu.gh}

\thanks{The first author was supported by the German Academic Exchange Service DAAD}

\author{Stephan Wagner}
\address{Stephan Wagner\\
Department of Mathematics\\
Uppsala University\\
Box 480
751 06 Uppsala\\
Sweden\\
and
Department of Mathematical Sciences\\
Stellenbosch University\\
Private Bag X1\\
Matieland 7602\\
South Africa}
\email{stephan.wagner@math.uu.se}

\thanks{The second author was supported by the National Research Foundation of South Africa, grant 96236, and the Knut and Alice Wallenberg Foundation.}

\subjclass[2010]{05C50, 05C05}
\keywords{recursive tree, binary increasing tree, eigenvalues, additive parameter, central limit theorem}

\maketitle

\begin{abstract}
We prove that the multiplicity of a fixed eigenvalue $\alpha$ in a random recursive tree on $n$ vertices satisfies a central limit theorem with mean and variance asymptotically equal to $\mu_{\alpha} n$ and $\sigma^2_{\alpha} n$ respectively. It is also shown that $\mu_{\alpha}$ and $\sigma^2_{\alpha}$ are positive for every totally real algebraic integer. The proofs are based on a general result on additive tree functionals due to Holmgren and Janson. In the case of the eigenvalue $0$, the constants $\mu_0$ and $\sigma^2_0$ can be determined explicitly by means of generating functions.
Analogous results are also obtained for Laplacian eigenvalues and binary increasing trees. 
\end{abstract}

\section{Introduction}

The spectra of matrices associated with graphs are known to carry substantial information about the graphs. The study of graph eigenvalues has found many applications in mathematics, physics, chemistry, biology, economics and computer science (see for example \cite[Chapter 9]{DPSGraphSpectra}).

\medskip

Eigenvalues of random graphs under different models of randomness have been studied extensively due to their connections to random matrix theory, see for example \cite{Anderson2010introduction,Bai2009Spectral,Tao2012Topics}.
Wigner's semicircle law \cite{Wigner1958distribution} famously appears in the limit of the eigenvalue distribution of dense random graphs.

\medskip

The eigenvalues of trees are quite different from those of arbitrary graphs. A famous early result due to Schwenk \cite{Schwenk1973almost} states that the probability that a (uniformly) random tree has a cospectral mate (a non-isomorphic tree with the same eigenvalues) tends to $1$ as the number of vertices goes to infinity. The actual distribution of eigenvalues in random trees was more recently studied by Bhamidi, Evans and Sen in \cite{bhamidi2012spectra}. In particular, they prove (see \cite[Theorem 4.1]{bhamidi2012spectra}) that the adjacency spectrum converges to a well-defined limit under different random tree models, including e.g.~conditioned Galton--Watson trees and recursive trees. However, they do not fully characterise this limit. Recently, Salez \cite{Salez2020Spectral} studied the spectral distribution of \emph{unimodular Galton--Watson trees}, which occur prominently as weak limits of large random graphs with given asymptotic degree distribution.

\medskip

The focus of this paper is the specific model of random recursive trees. A random recursive tree with $n$ vertices is constructed by a simple growth process: start with a root labelled $1$. In the $k$-th step, a vertex labelled $k$ is attached to one of the previous $k-1$ vertices, chosen uniformly at random. There are $(n-1)!$ different labelled trees that can be obtained in this way; each of them has the same probability $\frac{1}{(n-1)!}$ in our model.

\medskip

We provide a refinement of the result of Bhamidi, Evans and Sen for individual eigenvalues by studying the multiplicity of a fixed eigenvalue $\alpha$ in the adjacency spectrum of a large random recursive tree. In particular, we prove that this multiplicity has linear mean and variance in the size of the tree, and also show that a central limit theorem holds. Analogous results are also obtained for the Laplacian spectrum as well as binary increasing trees, which are similar to recursive trees and also belong to the general family of increasing trees that was introduced by Bergeron, Flajolet and Salvy \cite{bergeron1992varieties}. The proof of these results is based on tools from linear algebra combined with a general result due to Holmgren and Janson \cite{HolmgrenJanson} on additive tree functionals (for a precise definition, see Section~\ref{sec:additive}). In a forthcoming paper \cite{Dadedzi2022spectrum}, we will use rather different methods to establish a similar central limit theorem for simply generated trees.

\medskip

In the special case of the eigenvalue $0$, which is also related to other parameters of a tree (namely the independence number and the matching number), we can provide a more detailed analysis by means of generating functions and determine the mean and variance more precisely: specifically, the mean is $ 0.192694 n+ \Oh(1)$, and the variance is $0.138629 n + \Oh(n/\log n)$ for recursive trees with $n$ vertices. For binary increasing trees, mean and variance are $0.085753 n+ \Oh(1)$ and $0.057162 n+ \Oh(1)$, respectively. See Section~\ref{sec:EV0} for details.

\section{Preliminaries}

Before we get to the main theorems, we first provide some important definitions and gather auxiliary results on tree spectra, increasing trees, and additive tree functionals in this section.

\subsection{Eigenvalues of graphs and trees}

Let us briefly review some simple but relevant facts on the spectra of graphs and trees. The eigenvalues of a graph are the eigenvalues of its adjacency matrix, the Laplacian eigenvalues the eigenvalues of its Laplacian matrix (which has the vertex degrees as diagonal entries, and otherwise $-1$ or $0$ depending on whether or not vertices are adjacent). Since both the adjacency matrix and the Laplacian matrix are symmetric, all eigenvalues are real. Moreover, the Laplacian eigenvalues are always non-negative.

\medskip

The eigenvalues of bipartite graphs, and thus in particular trees, are known to be symmetric (see for instance \cite[Theorem 3.11]{CDSSpectra}). In other words, $\alpha$ is an eigenvalue if and only if $-\alpha$ is, and the multiplicities agree.

\medskip

An important result that we will make use of in our arguments is Cauchy's interlacing theorem \cite[Corollary 1.3.12]{DPSGraphSpectra}: if $M$ is a symmetric matrix with eigenvalues $\lambda_1 \geq \lambda_2 \geq \cdots \geq \lambda_n$, and $\beta_1 \geq \beta_2 \geq \cdots \geq \beta_{n-1}$ are the eigenvalues of the matrix obtained by removing one of its rows and the corresponding column, then we have
\[\lambda_1 \geq \beta_1 \geq \lambda_2 \geq \beta_2 \geq \cdots \geq \lambda_{n-1} \geq \beta_{n-1} \geq \lambda_n.\]
This is specifically relevant for us in the case where $M$ is the adjacency matrix of a graph. Removing a row and the corresponding column is equivalent to removing one of the vertices.

\subsection{Increasing trees}\label{sec:increasing}

Increasing trees are labelled rooted trees with the characteristic property that the labels of nodes increase as one moves along any path from the root to a leaf. Bergeron, Flajolet and Salvy \cite{bergeron1992varieties} were the first to study general varieties of increasing trees. Such varieties are characterised by weights associated with increasing trees.

\medskip

Let a sequence $(w_k)_{k\geq 0}$ of non-negative real numbers with $w_0 > 0$ (called the \emph{weight sequence}) be given. We define the weight $W(T)$ of an increasing tree $T$ by
\[W(T)= \prod_{v\in V(T)} w_{d^{\star}(v)} = \prod _{k\geq 0}w_k^{D_k(T)},\] where $d^{\star}(v)$ denotes the outdegree of $v$ and $D_k(T)$ is accordingly the number of vertices in $T$ with outdegree $k$.
We associate a generating series with the weight sequence by $\Phi (t) = \sum_{k\geq 0}w_kt^k$.

\medskip

Let $\Tree_n$ be the set of all increasing rooted ordered trees of order $n$. We define the exponential generating function for increasing trees associated with the weight sequence $(w_k)_{k \geq 0}$ by
\[G(x) =\sum_{T\in \Tree_n} \frac{W(T)x^{|T|}}{|T|!}.\] 
It is known \cite[Theorem 1]{bergeron1992varieties} that this generating function satisfies the differential equation \[G^\prime (x) = \Phi (G(x)), \quad G(0)=0.\]
Once the weight has been defined, \emph{random trees} from a specific variety are obtained by selecting a tree $T$ with a probability that is proportional to its weight $W(T)$.

\medskip

For certain special varieties of increasing trees, random trees can equivalently be constructed by means of a tree evolution (growth) process. These have been characterised by Panholzer and Prodinger in \cite{panholzer2007level}: they belong to three particular groups characterised by the weight generating series $\Phi(t)$.

\begin{align*}
&\text{(1) }\Phi (t) = w_0e^{\frac{w_1}{w_0}t}, \quad \text{where  } w_0>0 \text{ and  }w_1>0.\\
&\text{(2) }\Phi (t) = w_0\left(1-\frac{w_1}{rw_0}t \right)^{-r}, \quad \text{where  } r>0,w_0>0\text{ and  }w_1>0.\\
&\text{(3) }\Phi (t) = w_0 \left( 1+ \left(\frac{w_1}{dw_0} \right)t \right)^d, \quad \text{where  } d>1,w_0>0\text{ and  }w_1>0.
\end{align*}

These correspond to the varieties of \emph{recursive trees}, \emph{generalised plane-oriented recursive trees} and \emph{$d$-ary increasing trees}, respectively. In this paper, we will focus on two particularly important varieties, namely recursive trees and binary ($2$-ary) increasing trees. It is well known that the probabilistic model of random binary increasing trees is also equivalent to that of random \emph{binary search trees}, see for instance \cite[Chapter 6]{Drmota2009random}. One usually uses the following normalised versions of the weights above (for the probabilistic model, the choice of $w_0$ and $w_1$ is actually irrelevant):
\begin{enumerate}
\item Recursive trees are associated with $\Phi (t)=e^t$ ($w_0=w_1=1$).
\item Binary increasing trees are associated with $\Phi (t)= (1+t)^2$ ($d=2$, $w_0=1$ and $w_1=2$).
\end{enumerate}

The growth process for recursive trees has already been described in the introduction. For binary increasing trees, it is very similar, except that every vertex can have at most two children: a left child, a right child, or both.

\subsection{Additive tree functionals}\label{sec:additive}

Many invariants associated with rooted trees satisfy a type of recursion in which the invariant is summed over all root branches. The notion of \emph{additive functionals} \cite{wagner2015central,Janson2016asymptotic, HolmgrenJanson, Ralaivaosaona2019central} provides a unifying framework for invariants of this kind. Let $F$ be an invariant that assigns a value $F(T)$ to every rooted tree $T$. This invariant is said to be \emph{additive} with \emph{toll function} $f$ if it satisfies the recursion 
\[F(T) = \sum_{i=1}^k F(T_i) + f(T)\]
for all rooted trees $T$, where $T_1,T_2,\ldots ,T_k$ are the root branches of $T$, i.e., the components obtained by removing the root from $T$, endowed with their natural roots (the children of $T$'s root).

\medskip

Many important examples of tree invariants satisfy such a recursion. A typical example is the number of leaves, whose toll function is
\[f(T) = \begin{cases} 1 & T \text{ is a single vertex,} \\ 0 & \text{otherwise.}\end{cases}\]
A generalisation of this example concerns \emph{fringe subtrees}: a fringe subtree of a rooted tree is a subtree consisting of a vertex and all its descendants. Thus a single leaf can also be regarded as a fringe subtree. The number of fringe subtrees of a given shape is also an additive functional in this sense, and this example will be important for us later. As we will see, the multiplicity of a fixed eigenvalue also fits this framework.

\medskip

There are limit theorems for additive tree functionals for several models of random trees and under different technical assumptions, see \cite{wagner2015central,Janson2016asymptotic, HolmgrenJanson, Ralaivaosaona2019central}. For our purposes, we particularly rely on a general central limit theorem due to Holmgren and Janson \cite{HolmgrenJanson}:
as usual, let $\mathcal{N}(\mu,\sigma^2)$ denote a normal distribution with mean $\mu$ and variance $\sigma^2$, and let $\xrightarrow{d}$ denote convergence in distribution. Moreover, let $\Rec_n$ and $\Bin_n$ denote a random recursive tree of order $n$ and a random binary increasing tree of order $n$ respectively.

\begin{thm}[{see \cite[Theorem~1.14]{HolmgrenJanson}}]\label{Thm:CLT}
\
	\begin{enumerate}
		\item For random recursive trees, assume that 
		\begin{align*}
			\sum _{k=1}^\infty \frac{\sqrt{\Var f(\Rec_k) }}{k^{3/2}}&<\infty,\\
			\lim _{k\rightarrow \infty} \frac{\Var f(\Rec_k) }{k} &=0,\\
			\sum _{k=1}^\infty \frac{(\Ex (f(\Rec_k)))^2 }{k^{2}} &<\infty.
		\end{align*}
		Then, as $n\rightarrow \infty,$
		\begin{align*}
			\frac{\Ex (F(\Rec_n))}{n}&\rightarrow \hat{\mu}_F := \sum _{k=1}^\infty \frac{1}{k(k+1)}\Ex (f(\Rec_k)),\\
			\frac{\Var (F(\Rec_n))}{n}&\rightarrow \hat{\sigma}^2_F < \infty,\\
		\end{align*}
		and \[\frac{F(\Rec_n)-\Ex (F(\Rec_n))}{\sqrt{n}}\xrightarrow{d}\mathcal{N}(0,\hat{\sigma}^2_F). \]
		\item For binary increasing trees, assume that 
		\begin{align*}
			\sum _{k=1}^\infty \frac{\sqrt{\Var (f(\Bin_k)) }}{k^{3/2}}&<\infty,\\
			\lim _{k\rightarrow \infty} \frac{\Var (f(\Bin_k)) }{k} &=0,\\
			\sum _{k=1}^\infty \frac{(\Ex (f(\Bin_k)))^2 }{k^{2}} &<\infty.
		\end{align*}
		Then, as $n\rightarrow \infty,$
		\begin{align*}
			\frac{\Ex (F(\Bin_n))}{n}&\rightarrow \mu_F := \sum _{k=1}^\infty \frac{2}{(k+1)(k+2)}\Ex (f(\Bin_k)),\\
			\frac{\Var (F(\Bin_n))}{n}&\rightarrow \sigma^2_F,
		\end{align*}
		and \[\frac{F(\Bin_n)-\Ex (F(\Bin_n))}{\sqrt{n}}\xrightarrow{d}\mathcal{N}(0,\sigma^2_F). \]
	\end{enumerate}
\end{thm}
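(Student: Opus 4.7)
The plan is to exploit the self-similar structure of random recursive trees (the binary case being entirely analogous). First, by iterating the additive recursion one obtains the global representation
\[
F(T) \;=\; \sum_{v \in V(T)} f(T^{(v)}),
\]
where $T^{(v)}$ denotes the fringe subtree rooted at $v$. This rewrites $F(\Rec_n)$ as a linear statistic of the fringe subtree profile of $\Rec_n$ and reduces the problem to an analysis of this profile.

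For the mean, I would use the classical fact that the expected number of fringe subtrees of size $k$ in $\Rec_n$ is asymptotically $\tfrac{n}{k(k+1)}$, and that, conditionally on this size being $k$, the corresponding fringe subtree is distributed as $\Rec_k$. This yields
\[
\Ex F(\Rec_n) \;=\; n \sum_{k \ge 1} \frac{1}{k(k+1)} \Ex f(\Rec_k) + o(n),
\]
with the passage to the limit justified by dominated convergence using the third summability hypothesis. The analogous formula for $\Bin_n$ follows from the expected fringe subtree count $\tfrac{2n}{(k+1)(k+2)}$ in binary increasing trees.

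For the variance, I would expand
\[
\Var F(\Rec_n) \;=\; \sum_{u,v \in V(\Rec_n)} \mathrm{Cov}\bigl(f(T^{(u)}),\, f(T^{(v)})\bigr),
\]
splitting according to whether $u$ is an ancestor of $v$ (or vice versa) or the two fringe subtrees are disjoint. Disjoint pairs contribute the bulk of the variance and are handled by showing that the joint law of two fringe subtrees is close to a product law for most pairs. The ancestral contributions are kept of order $O(n)$ using the assumption $\Var f(\Rec_k) = o(k)$ together with the first summability hypothesis, which suppresses the correlation between a vertex and its descendants.

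Finally, for the central limit theorem I would work with the filtration generated by the tree growth process (vertex $k$ is attached at step $k$) and decompose $F(\Rec_n) - \Ex F(\Rec_n)$ into martingale differences indexed by the insertion step. The martingale central limit theorem then applies once a conditional-variance convergence and a Lindeberg-type condition are verified, both of which reduce, via the fringe subtree analysis above, to the three summability assumptions on $f$. I expect the main obstacle to be the variance step: controlling the covariance contributions from pairs of nearby or ancestral vertices, where the interplay between overlapping fringe subtrees and the toll function requires careful bookkeeping of the joint distribution of fringe subtree sizes and of ancestry probabilities in $\Rec_n$.
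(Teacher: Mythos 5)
The paper does not prove Theorem~\ref{Thm:CLT}: it is quoted verbatim from Holmgren and Janson \cite{HolmgrenJanson} and used as a black box throughout, so there is no internal proof to compare against. What can be evaluated is whether your outline would plausibly yield a proof, and how it relates to the strategy in \cite{HolmgrenJanson}.

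Your opening move---expanding $F$ as a sum $\sum_v f(T^{(v)})$ over fringe subtrees and reducing the problem to the fringe-tree profile---is exactly the central device in Holmgren and Janson, so the mean step is in the right spirit. Be aware though that ``dominated convergence'' must be cashed out concretely: the third hypothesis gives $\sum_k |\Ex f(\Rec_k)|/k^{2}<\infty$ by Cauchy--Schwarz, and that, together with the (essentially exact) fringe-size profile $\Ex[\#\{v:|T^{(v)}|=k\}] = n/(k(k+1))$, is what justifies the interchange of sum and limit.

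The genuine gap is in the CLT step, and to a lesser degree the variance step. A martingale-difference decomposition along the growth filtration cannot be closed under the stated hypotheses alone: the martingale CLT needs a conditional Lindeberg condition, and the theorem only supplies second-moment control on $f$ (the sums involving $\Var f(\Rec_k)$ and $\Ex f(\Rec_k)$), not the $(2+\varepsilon)$-moment or uniform integrability input one normally needs to check Lindeberg. The route Holmgren and Janson actually take sidesteps this: truncate to $F_M(T)=\sum_{v:\,|T^{(v)}|\le M} f(T^{(v)})$, establish joint asymptotic normality of the (finitely many) counts of fringe subtrees of bounded size---a tame, effectively $m$-dependent problem---and then let $M\to\infty$, using precisely the three summability hypotheses to show the truncation error is $o(\sqrt{n})$ in probability. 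If you pursue the martingale route you will be forced into essentially the same truncation inside the Lindeberg verification, at which point the two proofs coincide. Your variance discussion also understates the bookkeeping: the covariance between \emph{disjoint} fringe subtrees is not a lower-order nuisance term---it is typically negative (the total vertex budget is fixed) and contributes at the same order $\Theta(n)$ as the diagonal and ancestral terms---so ``disjoint pairs contribute the bulk'' is at best imprecise, and the limiting constant $\hat\sigma^2_F$ genuinely depends on all three contributions.
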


\subsection{The toll function}\label{sec:toll}

In this section, we show that the multiplicity of an eigenvalue in the spectrum of a rooted tree can be viewed as an additive parameter with bounded toll function. A similar statement is also obtained for the Laplacian spectrum. We let $N_{\alpha}(T)$ denote the multiplicity of the eigenvalue $\alpha$ in the spectrum of a rooted tree $T.$ Let $T-r$ be the forest obtained from a rooted tree $T$ by deleting the root $r$.

Now we let $\lambda _1 \geq \lambda _2 \geq \cdots \geq \lambda _n$  and $\beta _1 \geq \beta _2 \geq \cdots \geq \beta _{n-1} $ be the eigenvalues of the tree $T$ and the forest $T-r$ respectively. Suppose that $\alpha = \lambda_{k+1}$ has multiplicity $l,$ that is 
\[\lambda _1 \geq \cdots \geq \lambda _k > \lambda_{k+1} = \cdots= \lambda_{k+l} > \lambda_{k+l+1}\geq \cdots \geq\lambda _n.\]
By the Cauchy interlacing theorem we have the following three possibilities.

\textbf{Case 1:}
\[\lambda_k\geq\beta_k>\lambda_{k+1} = \beta_{k+1}= \cdots= \beta_{k+l-1}= \lambda_{k+l} > \beta_{k+l}\geq\lambda_{k+l+1}\]
so that $ N_{\alpha}(T) - N_{\alpha} (T-r)= l- (l-1) = 1,$

\textbf{Case 2:}
\[\lambda_k>\beta_k=\lambda_{k+1} = \beta_{k+1}= \cdots= \beta_{k+l-1}= \lambda_{k+l} = \beta_{k+l}>\lambda_{k+l+1}\]
so that $ N_{\alpha}(T) - N_{\alpha} (T-r)= l- (l+1) = -1,$

\textbf{Case 3:}
\[\lambda_k>\beta_k=\lambda_{k+1} = \beta_{k+1}= \cdots= \beta_{k+l-1}= \lambda_{k+l} > \beta_{k+l}\geq\lambda_{k+l+1}\]
or 
\[\lambda_k\geq \beta_k>\lambda_{k+1} = \beta_{k+1}= \cdots= \beta_{k+l-1}= \lambda_{k+l} = \beta_{k+l}>\lambda_{k+l+1}\]
so that $ N_{\alpha}(T) - N_{\alpha} (T-r)= l-l = 0.$

So we see that $N_\alpha(T)-N_\alpha(T-r) \in \{-1,0,1\}$. The forest $T-r$ has the branches of $T$ as its components; let them be denoted by $T_1,T_2,\ldots,T_k$.
Since the multiplicity of an eigenvalue in the adjacency matrix of a graph can be obtained by summing the multiplicitity over the connected components (if we choose an appropriate vertex order, the adjacency matrix becomes a block diagonal matrix with the blocks corresponding to the components), we have
\[N_\alpha(T-r) = \sum _{i=1}^kN_\alpha(T_i).\]
 Altogether, we thus obtain the recursion
\[N_\alpha(T) = \sum _{i=1}^kN_\alpha(T_i) + n_\alpha(T),\]
where the toll function $n_\alpha(T)$ is given by \[n_\alpha(T)= N_\alpha(T)-N_\alpha(T-r) \in \{-1,0,1\}.\]
For the Laplacian spectrum, we can argue in a similar manner, but we have to modify the definition slightly. Instead of considering the multiplicity of $\alpha$ as a Laplacian eigenvalue of $T$, let $M_\alpha(T)$ be the multiplicity of $\alpha$ as an eigenvalue of the matrix obtained from the Laplacian matrix by adding $1$ to the diagonal entry in the row and column that correspond to the root (i.e., we artificially increase the root degree by $1$). Let this be called the \emph{modified Laplacian matrix}. As changing a single entry of a matrix can change its rank by at most $1$, $M_\alpha(T)$ differs from the multiplicity of $\alpha$ as an eigenvalue of the Laplacian matrix by at most $1$.

\medskip

If we remove the row and column that correspond to the root from the modified Laplacian matrix, then we obtain (up to reordering) a block diagonal matrix, where each block is the modified Laplacian matrix of one of the root branches. Here, it is important that we are using the modified Laplacian as opposed to the normal Laplacian matrix, since the degree of the root in each branch $T_i$ is exactly $1$ less than its degree in the whole tree $T$. Using the Cauchy interlacing theorem once again, we find that
\[M_\alpha(T) = \sum _{i=1}^kM_\alpha(T_i) + m_\alpha(T),\]
where $m_\alpha(T) \in \{-1,0,1\}$.

\section{The central limit theorem}\label{Sec:LimDist_Incr_tree}

From the discussion in the previous section, we know that $N_{\alpha}(T)$ can be regarded as an additive parameter whose toll function $n_{\alpha}(T)=N_{\alpha}(T)-N_{\alpha}(T-r)$ satisfies $n_{\alpha}(T)\in \{-1,0,1\}$. So in particular, if $\Rec_n$ and $\Bin_n$ denote a random recursive tree of order $n$ and a random binary increasing tree of order $n$ respectively,
\[\Ex (n_{\alpha}(\Rec_k)), \Var (n_{\alpha}(\Rec_k))\]
as well as
\[\Ex (n_{\alpha}(\Bin_k)), \Var (n_{\alpha}(\Bin_k))\]
are all trivially bounded by constants, showing that the technical conditions of Theorem~\ref{Thm:CLT}
are satisfied. Hence we have the following theorem:

\begin{thm}\label{thm:mult-CLT}
Fix a real number $\alpha$ that can occur as an eigenvalue of a tree. There exist constants $\mu_{\rec,\alpha}$ and $\sigma^2_{\rec,\alpha}$ such that the multiplicity $N_{\alpha}(\Rec_n)$ of $\alpha$ as an eigenvalue of the random recursive tree $\Rec_n$ with $n$ vertices satisfies
\[\frac{N_{\alpha}(\Rec_n)-\mu_{{\rec},\alpha}n}{\sqrt{n}}\xrightarrow{d}\mathcal{N}(0,\sigma^2_{{\rec},\alpha}). \]
Likewise, suppose that $\alpha$ can occur as an eigenvalue of a binary tree. There exist constants $\mu_{{\bin},\alpha}$ and $\sigma^2_{\bin,\alpha}$ such that the multiplicity $N_{\alpha}(\Bin_n)$ of $\alpha$ as an eigenvalue of the random binary increasing tree $\Bin_n$ with $n$ vertices satisfies
\[\frac{N_{\alpha}(\Bin_n)-\mu_{\bin,\alpha}n}{\sqrt{n}}\xrightarrow{d}\mathcal{N}(0,\sigma^2_{\bin,\alpha}). \]
\end{thm}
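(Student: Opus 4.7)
The plan is to invoke Theorem~\ref{Thm:CLT} with $f = n_{\alpha}$ and $F = N_{\alpha}$, using the additive representation already established in Section~\ref{sec:toll}. First I would note that since $n_{\alpha}(T) \in \{-1,0,1\}$ for every tree $T$, the quantities $\Ex(n_{\alpha}(\Rec_k))$ and $\Var(n_{\alpha}(\Rec_k))$ lie in $[-1,1]$ and $[0,1]$ respectively for all $k \geq 1$ (and the same for $\Bin_k$). Each of the three summability hypotheses of Theorem~\ref{Thm:CLT}(1) then reduces to a comparison with a convergent $p$-series:
\[\sum_{k\ge 1} \frac{\sqrt{\Var(n_{\alpha}(\Rec_k))}}{k^{3/2}} \le \sum_{k\ge 1} \frac{1}{k^{3/2}},\qquad \frac{\Var(n_{\alpha}(\Rec_k))}{k} \le \frac{1}{k},\qquad \sum_{k\ge 1} \frac{(\Ex(n_{\alpha}(\Rec_k)))^2}{k^{2}} \le \sum_{k\ge 1} \frac{1}{k^{2}},\]
all of which are trivially satisfied.

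Applying Theorem~\ref{Thm:CLT}(1) then produces the constants
\[\mu_{\rec,\alpha} := \sum_{k \ge 1} \frac{\Ex(n_{\alpha}(\Rec_k))}{k(k+1)}\qquad\text{and}\qquad \sigma^2_{\rec,\alpha} := \hat{\sigma}^2_{N_\alpha} < \infty,\]
for which $\Ex(N_{\alpha}(\Rec_n))/n \to \mu_{\rec,\alpha}$ and $(N_{\alpha}(\Rec_n) - \Ex(N_{\alpha}(\Rec_n)))/\sqrt{n} \xrightarrow{d} \mathcal{N}(0,\sigma^2_{\rec,\alpha})$. To recast the CLT around $\mu_{\rec,\alpha} n$ as stated, I would appeal to the sharper mean estimate $\Ex(N_{\alpha}(\Rec_n)) = \mu_{\rec,\alpha} n + o(\sqrt{n})$ that Holmgren--Janson's argument delivers when the toll function is uniformly bounded, and conclude by Slutsky's theorem; this finer mean estimate is verified explicitly in Section~\ref{sec:EV0} for the special case $\alpha = 0$.

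The binary-increasing-tree assertion is obtained by repeating the argument verbatim with $\Rec_k$ replaced by $\Bin_k$ and with part (2) of Theorem~\ref{Thm:CLT} in place of part (1), yielding the analogous $\mu_{\bin,\alpha}$ and $\sigma^2_{\bin,\alpha}$. There is essentially no analytic obstacle to overcome: because $n_{\alpha}$ takes only the values $-1$, $0$, $1$, all the moment conditions of the Holmgren--Janson theorem are trivial, and the substantive content of the proof has already been carried out in the interlacing analysis of Section~\ref{sec:toll}, which reduced the question to that of a bounded-toll additive functional.
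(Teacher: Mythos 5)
Your proposal is correct and follows essentially the same route as the paper: Section~\ref{sec:toll} establishes that $n_\alpha$ is a bounded toll function, and the result is deduced by verifying the (trivially satisfied) moment conditions of Theorem~\ref{Thm:CLT}. The one refinement you add beyond the paper's terse derivation is to explicitly flag that recentring the CLT around $\mu_{\rec,\alpha}n$ rather than $\Ex N_\alpha(\Rec_n)$ requires the sharper mean estimate $\Ex N_\alpha(\Rec_n)=\mu_{\rec,\alpha}n+o(\sqrt{n})$; this does hold (indeed with $O(1)$ error for a bounded toll, as the paper itself uses in Proposition~\ref{prop:occurrences}), so your appeal to Slutsky is sound and makes explicit a step the paper leaves implicit.
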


\begin{rem}
It is known that a real number $\alpha$ can be an eigenvalue of a tree if and only if it is a totally real algebraic integer, i.e., it is a zero of a monic polynomial whose coefficients are integers and whose zeros are all real (in other words, the conjugates of $\alpha$ must also all be real). It is clear that this is a necessary condition, and it was also shown to be sufficient by Salez \cite{Salez2015Totally}. Since eigenvalues of a graph are bounded by the maximum degree, not all such real numbers are also eigenvalues of a binary tree. We are not aware of any explicit characterisation of the possible eigenvalues of binary trees.
\end{rem}

A completely analogous statement holds for the Laplacian spectrum. Note here that we are applying Theorem~\ref{Thm:CLT} to the invariant $M_\alpha(T)$, which is not exactly the multiplicity of $\alpha$ as an eigenvalue of the Laplacian of $T$. However, since the difference is at most $1$, it vanishes in the limit due to the normalising factor $\sqrt{n}$.

\begin{thm}
Fix a real number $\alpha$ that can occur as an eigenvalue of the modified Laplacian matrix of a rooted tree. There exist constants $\nu_{\rec,\alpha}$ and $\tau^2_{\rec,\alpha}$ such that the multiplicity $L_{\alpha}(\Rec_n)$ of $\alpha$ as a Laplacian eigenvalue of the random recursive tree $\Rec_n$ with $n$ vertices satisfies
\[\frac{L_{\alpha}(\Rec_n)-\nu_{\rec,\alpha}n}{\sqrt{n}}\xrightarrow{d}\mathcal{N}(0,\tau^2_{\rec,\alpha}). \]
Likewise, there exist constants $\nu_{\bin,\alpha}$ and $\tau^2_{\bin,\alpha}$ such that the multiplicity $L_{\alpha}(\Bin_n)$ of $\alpha$ as a Laplacian eigenvalue of the random binary increasing tree $\Bin_n$ with $n$ vertices satisfies
\[\frac{L_{\alpha}(\Bin_n)-\nu_{\bin,\alpha}n}{\sqrt{n}}\xrightarrow{d}\mathcal{N}(0,\tau^2_{\bin,\alpha}). \]
\end{thm}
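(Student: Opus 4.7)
The plan is to reuse the additive-functional machinery already set up for the adjacency case in the proof of Theorem~\ref{thm:mult-CLT}, but applied to the auxiliary invariant $M_\alpha$ rather than $L_\alpha$ itself, and then to transfer the resulting limit law from $M_\alpha$ to $L_\alpha$ using a simple rank-perturbation bound.

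First, I would invoke the decomposition established in Section~\ref{sec:toll}, namely
\[M_\alpha(T) = \sum_{i=1}^{k} M_\alpha(T_i) + m_\alpha(T),\qquad m_\alpha(T)\in\{-1,0,1\},\]
which shows that $M_\alpha$ is an additive tree functional with a toll function bounded in absolute value by $1$. Since $|m_\alpha(T)|\le 1$ deterministically, for every $k$ we have $|\Ex(m_\alpha(\Rec_k))|\le 1$ and $\Var(m_\alpha(\Rec_k))\le 1$, and likewise for $\Bin_k$. The three summability/limit conditions in Theorem~\ref{Thm:CLT} therefore reduce to the trivially convergent series $\sum k^{-3/2}$, $\sum k^{-2}$ and the trivial limit $1/k\to 0$. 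Applying Theorem~\ref{Thm:CLT}(1) and (2) yields constants $\nu_{\rec,\alpha}, \tau^2_{\rec,\alpha}, \nu_{\bin,\alpha}, \tau^2_{\bin,\alpha}$ such that
\[\frac{M_\alpha(\Rec_n)-\nu_{\rec,\alpha}n}{\sqrt{n}}\xrightarrow{d}\mathcal{N}(0,\tau^2_{\rec,\alpha}),\qquad \frac{M_\alpha(\Bin_n)-\nu_{\bin,\alpha}n}{\sqrt{n}}\xrightarrow{d}\mathcal{N}(0,\tau^2_{\bin,\alpha}).\]

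Next, I would pass from $M_\alpha$ to the genuine Laplacian multiplicity $L_\alpha$. The modified Laplacian differs from the Laplacian in one diagonal entry, hence in a matrix of rank at most $1$; by the standard interlacing/rank perturbation inequality for symmetric matrices, the multiplicity of any fixed value as an eigenvalue can change by at most $1$. This is exactly the bound recorded in the paragraph preceding the statement. Consequently $|L_\alpha(T)-M_\alpha(T)|\le 1$ for every rooted tree $T$, and in particular
\[\frac{L_\alpha(\Rec_n)-M_\alpha(\Rec_n)}{\sqrt{n}}\longrightarrow 0\]
almost surely (and the same for $\Bin_n$). Slutsky's theorem then transfers the central limit theorem from $M_\alpha$ to $L_\alpha$ with the same centring $\nu_{\rec,\alpha}n$ (resp.\ $\nu_{\bin,\alpha}n$) and the same limiting variance $\tau^2_{\rec,\alpha}$ (resp.\ $\tau^2_{\bin,\alpha}$).

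There is essentially no genuine obstacle here because all the analytical content has already been packaged into Theorem~\ref{Thm:CLT} and into the toll-function analysis of Section~\ref{sec:toll}; the only subtle point is the one already flagged in the paper, namely that one must apply the Holmgren--Janson theorem to the \emph{modified} Laplacian invariant $M_\alpha$ (for which the block-diagonalisation after deleting the root works cleanly, because the root's degree in each branch is one less than in the whole tree) rather than to $L_\alpha$ directly, and then absorb the $\Oh(1)$ discrepancy via the $\sqrt{n}$ normalisation.
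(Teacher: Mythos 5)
Your proposal is correct and follows essentially the same route as the paper: apply Theorem~\ref{Thm:CLT} to the additive functional $M_\alpha$ (the modified-Laplacian multiplicity) whose toll function $m_\alpha$ is bounded in $\{-1,0,1\}$ by the Cauchy interlacing argument of Section~\ref{sec:toll}, then transfer the limit law to $L_\alpha$ using the deterministic bound $|L_\alpha(T)-M_\alpha(T)|\le 1$, which disappears under the $\sqrt{n}$ normalisation. The only cosmetic difference is that you name Slutsky's theorem where the paper simply says the $\Oh(1)$ discrepancy ``vanishes in the limit''; otherwise the two arguments coincide.
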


\begin{rem}
Any finite linear combination of multiplicities $N_{\alpha}$ can also be regarded as an additive functional whose toll function is still bounded. Therefore Theorem~\ref{Thm:CLT} still applies, which means that any finite linear combination $\sum_{i=1}^k c_i N_{\alpha_i}(T)$ also satisfies a central limit theorem. Applying the Cramér--Wold device \cite[Chapter 5, Theorem 10.5]{Gut2013probability}, we see that the multiplicities of any finite set of eigenvalues satisfy a multidimensional central limit theorem. An analogous statement also holds for Laplacian eigenvalues.
\end{rem}

\begin{rem}
Note that we can express the mean constants as
\[\mu_{\rec,\alpha} = \sum_{k=1}^{\infty} \frac{\Ex (n_{\alpha}(\Rec_k))}{k(k+1)} \text{ and } \mu_{\bin,\alpha} = \sum_{k=1}^{\infty} \frac{2\Ex (n_{\alpha}(\Bin_k))}{(k+1)(k+2)},\]
respectively. Analogous formulas hold for $\nu_{\rec,\alpha}$ and $\nu_{\bin,\alpha}$.

\medskip

It is clear that the sums
\[\sum_{\alpha} \mu_{\rec,\alpha} \text{ and } \sum_{\alpha} \mu_{\bin,\alpha}\]
and their counterparts for Laplacian eigenvalues are less than or equal to $1$. However, it is not possible to interchange the order of summation in
\[ \sum_{\alpha} \mu_{\rec,\alpha} = \sum_{\alpha} \sum_{k=1}^{\infty} \frac{\Ex (n_{\alpha}(\Rec_k))}{k(k+1)}\]
since the double sum may not be absolutely convergent. So it is not clear whether or not the sum is indeed equal to $1$, which would mean that the limiting spectral measure is purely discrete. The same comment applies to binary increasing trees and Laplacian eigenvalues.
\end{rem}

\subsection{Forcing subtrees and positivity of the mean constants}

Even if a real number $\alpha$ can occur as an eigenvalue of a tree, it is not a priori clear that the corresponding mean constant $\mu_{\rec}(\alpha)$ is not equal to $0$. In this section, we prove that this is always the case. In fact, we are able to provide lower bounds on $\mu_{\rec}(\alpha)$ and $\mu_{\bin}(\alpha)$. The main idea is to establish a relationship between the number of occurrences of a subtree $H$ in a large tree $T$ and specific eigenvalues in the spectrum of $T$. 

\medskip

Recall that a fringe subtree of a rooted tree $T$ is a subtree that consists of a vertex $v$ and all its descendants. Let us denote this subtree by $T_v$. It is easy to prove by induction that any additive functional $F$ with toll function $f$ satisfies
$$F(T) = \sum_v f(T_v)$$
if we define $f(\bullet) = F(\bullet)$ for the single-vertex tree $\bullet$. The number of occurrences of a fixed rooted tree $H$ as a fringe subtree (i.e., the number of fringe subtrees isomorphic to $H$) is an additive functional with a toll function given by $f(T) = 1$ whenever $T$ is isomorphic to $H$ and $f(T) = 0$ otherwise. Therefore, Theorem~\ref{Thm:CLT} applies to this functional (see also \cite{wagner2015central}), and we have the following:

\begin{prop}\label{prop:occurrences}
For any fixed rooted tree $H$, the number of occurrences of $H$ as a fringe subtree of a random recursive tree with $n$ vertices satisfies a central limit theorem with mean $\mu_{\rec,H}n + \Oh(1)$. The constant is given by $\mu_{\rec,H} = \frac{\beta_{\rec}(H)}{|H|(|H|+1)}$, where $\beta_{\rec}(H)$ is the probability that a random recursive tree with $|H|$ vertices is isomorphic to $H$.

Likewise, for any fixed rooted binary tree $H$, the number of occurrences of $H$ as a fringe subtree of a random binary increasing tree with $n$ vertices satisfies a central limit theorem with mean $\mu_{\bin,H}n + \Oh(1)$. The constant is given by $\mu_{\bin,H} = \frac{2\beta_{\bin}(H)}{(|H|+1)(|H|+2)}$, where $\beta_{\bin}(H)$ is the probability that a random binary increasing tree with $|H|$ vertices is isomorphic to $H$.
\end{prop}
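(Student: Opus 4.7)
The plan is to realise the number of fringe-subtree copies of $H$ as an additive tree functional and apply Theorem~\ref{Thm:CLT} from the Holmgren--Janson framework. Specifically, define the toll function
\[
f(T) = \begin{cases} 1 & T \cong H, \\ 0 & \text{otherwise.} \end{cases}
\]
Then, as noted in Section~\ref{sec:additive}, the additive functional $F(T)=\sum_v f(T_v)$ obtained by unfolding the recursion $F(T)=\sum_{i=1}^k F(T_i)+f(T)$ counts exactly the number of vertices $v$ whose fringe subtree $T_v$ is isomorphic to $H$. Since $f$ takes values in $\{0,1\}$, both $\Ex(f(\Rec_k))$ and $\Var(f(\Rec_k))$ (and similarly for $\Bin_k$) are trivially bounded by $1$, so all three technical hypotheses of Theorem~\ref{Thm:CLT} hold immediately. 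Theorem~\ref{Thm:CLT} therefore yields the central limit theorem in both the recursive and binary increasing settings.

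To identify the mean constants, I would substitute directly into the series formulas from Theorem~\ref{Thm:CLT}. The toll function $f$ is supported only on trees of size exactly $|H|$, so the infinite sums collapse to a single term:
\[
\hat{\mu}_F = \sum_{k=1}^{\infty}\frac{\Ex(f(\Rec_k))}{k(k+1)} = \frac{\Pr[\Rec_{|H|}\cong H]}{|H|(|H|+1)} = \frac{\beta_{\rec}(H)}{|H|(|H|+1)},
\]
which is $\mu_{\rec,H}$, and the analogous computation in the binary case gives
\[
\mu_F = \sum_{k=1}^{\infty}\frac{2\Ex(f(\Bin_k))}{(k+1)(k+2)} = \frac{2\beta_{\bin}(H)}{(|H|+1)(|H|+2)} = \mu_{\bin,H}.
\]

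The one point requiring a bit more than a direct citation of Theorem~\ref{Thm:CLT} is the sharper $\Oh(1)$ error for the expectation, as Theorem~\ref{Thm:CLT} is stated only with $\Ex(F(\Rec_n))/n \to \hat{\mu}_F$ (i.e.\ $o(n)$ error). For this, I would compute $\Ex(F(\Rec_n))=\sum_v \Pr[(\Rec_n)_v \cong H]$ directly: for a random recursive tree, the probability that the fringe subtree rooted at the vertex with label $j$ has a prescribed shape $H$ of size $|H|$ admits an explicit expression of the form $\frac{c(H)}{j(j+1)\cdots(j+|H|-1)}$ for $1 \leq j \leq n-|H|+1$, using the classical decomposition of the growth process. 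Summing over $j$ and comparing with the partial fraction expansion $\frac{1}{j(j+|H|-1)\cdots}$ yields $\mu_{\rec,H} n + \Oh(1)$ after telescoping; the analogous calculation works for binary increasing trees. This exact-enumeration step is the only real technical obstacle; the CLT itself is an immediate application of Theorem~\ref{Thm:CLT} to a bounded toll function supported on a single tree size.
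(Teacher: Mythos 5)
Your proof is correct and follows essentially the same route as the paper: take the toll function $f(T)=1$ when $T\cong H$ and $f(T)=0$ otherwise, observe it is bounded so the hypotheses of Theorem~\ref{Thm:CLT} hold trivially, and read off the mean constant from the single nonzero term of the series. You are also right to flag that Theorem~\ref{Thm:CLT} as stated only yields $\Ex(F(\Rec_n)) = \hat{\mu}_F n + o(n)$, not the claimed $\Oh(1)$ error; the paper does not spell this out and instead defers to \cite{wagner2015central}, but your proposed direct computation of the fringe-subtree size distribution is the correct way to recover the sharper bound — in fact, for random recursive trees the expected number of size-$k$ fringe subtrees of $\Rec_n$ equals $n/(k(k+1))$ exactly for $k<n$, and conditionally on its size a fringe subtree is again uniformly random, so the error term is genuinely $\Oh(1)$ (and analogously for $\Bin_n$).
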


Now, let us consider the relationship between the spectrum of a rooted tree $T$ and fringe trees found in $T$. When we join $k_i$ copies of a tree $H$ to different vertices $u_i$ ($i \in \{1,2,\ldots, l\}$) in a tree $T$, we can bound the multiplicities of certain eigenvalues in the resulting tree from below. This is captured in the following results.

\begin{thm}\label{Thm:DiffVecticesAdj}
	Let $T$ be a tree obtained from $G$ by joining $k_i$ copies of the rooted tree $H$ to the vertices $u_i\in V(G)$, $i \in \{1,2,\ldots, l\}$. That is, for each $i$, we take $k_i$ copies of $H$ and connect each of their roots to $u_i$ by an edge. Then each eigenvalue of $H$ is an eigenvalue of the resulting tree, and the multiplicity of each of these  eigenvalues is at least $\sum _{i=1}^l(k_i-1).$
\end{thm}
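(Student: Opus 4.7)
The plan is to construct, for each eigenvalue $\alpha$ of $H$, an explicit $\sum_{i=1}^l (k_i-1)$-dimensional family of eigenvectors of the adjacency matrix $A_T$ with eigenvalue $\alpha$. The guiding intuition is that the $k_i$ copies of $H$ attached to the same vertex $u_i$ are interchangeable, so differences between them produce eigenvectors of $T$ that are supported entirely on the attached copies and do not interact with the rest of the tree.

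\medskip

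Fix an eigenvector $v\colon V(H)\to\mathbb{R}$ of $A_H$ with eigenvalue $\alpha$. For each $i\in\{1,\dots,l\}$, label the copies of $H$ attached to $u_i$ as $H_i^{(1)},\dots,H_i^{(k_i)}$, and let $\tilde v_i^{(j)}$ be the vector on $V(T)$ that agrees with $v$ on $V(H_i^{(j)})$ (under the natural identification) and vanishes on the rest of $T$. For scalars $c_1,\dots,c_{k_i}$ satisfying $\sum_{j=1}^{k_i}c_j=0$, I will show that $w_i:=\sum_{j=1}^{k_i}c_j\,\tilde v_i^{(j)}$ satisfies $A_T w_i = \alpha w_i$. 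Since the space of coefficient tuples with vanishing sum is $(k_i-1)$-dimensional, this yields a $(k_i-1)$-dimensional eigenspace associated with each $u_i$.

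\medskip

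The verification is local. At a non-root vertex of some $H_i^{(j)}$, the $T$-neighbourhood coincides with the $H$-neighbourhood, so the eigenvector equation reduces to $A_H v=\alpha v$. At the root of $H_i^{(j)}$ there is one additional neighbour, namely $u_i$, but $w_i(u_i)=0$, so this extra term contributes nothing and the equation once again follows from $A_H v=\alpha v$. At every remaining vertex of $T$, both $w_i$ and all its neighbours vanish. The only substantive check is at $u_i$ itself, where the adjacency sum equals $\bigl(\sum_j c_j\bigr)\cdot v(\text{root of }H)=0$, matching $\alpha\cdot w_i(u_i)=0$. This is precisely what the coupling constraint $\sum_j c_j=0$ is designed to ensure, and the computation at $u_i$ is the only place in the argument where that constraint is used; it is also the only step that I would identify as a genuine obstacle requiring care.

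\medskip

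Finally, because the vectors $w_i$ arising from different indices $i$ are supported on disjoint sets of $H$-copies (the copies attached to different $u_{i'}$ share no vertices), the $(k_i-1)$-dimensional eigenspaces for different $i$ sum directly. This produces at least $\sum_{i=1}^l(k_i-1)$ linearly independent eigenvectors of $A_T$ for the eigenvalue $\alpha$, establishing the claimed lower bound on the multiplicity. As a minor bookkeeping remark, if $v$ happens to vanish at the root of $H$, then the constraint $\sum_j c_j=0$ becomes vacuous and one in fact obtains $k_i$ independent eigenvectors per index $i$, consistent with (and stronger than) the stated bound.
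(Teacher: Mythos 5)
Your proof is correct, but it takes a genuinely different route from the paper's. The paper's proof deletes the vertices $u_1,\dots,u_l$ from $T$, observes that the resulting forest contains $k_1+\cdots+k_l$ disjoint copies of $H$ (so $\alpha$ has multiplicity at least $\sum_i k_i$ in that forest), and then invokes Cauchy's interlacing theorem $l$ times --- once per deleted vertex --- to conclude that the multiplicity in $T$ can drop by at most $l$, yielding $\sum_i k_i - l = \sum_i(k_i-1)$. Your argument is instead constructive: you exhibit, for each $i$, a $(k_i-1)$-dimensional family of eigenvectors supported only on the copies of $H$ attached to $u_i$, using the zero-sum constraint on the coefficients precisely so that the eigenvector equation is satisfied at $u_i$; linear independence across different indices $i$ then follows from disjoint supports. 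Both arguments are valid and give the same bound. The interlacing route is shorter and reuses machinery already set up in the paper's preliminaries, whereas your construction is self-contained, makes the eigenspace explicit, and, as you note, sometimes gives a slightly better bound (e.g.\ $k_i$ eigenvectors per index when the chosen eigenvector of $H$ vanishes at the root, and similarly when $\alpha$ has a higher-dimensional eigenspace in $H$).
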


\begin{proof}
	Consider the forest $T \setminus \{u_1, u_2,\ldots, u_l\}.$ Obviously, it has $k_1+ k_2+\cdots +k_l$ components isomorphic to $H.$ So if $\alpha$ is an eigenvalue of $H,$ then it is an eigenvalue of  $T \setminus \{u_1, u_2,\ldots, u_l\}$ whose multiplicity is at least $k_1+ k_2+\cdots +k_l.$ The interlacing theorem shows that the multiplicity of $\alpha$ as an eigenvalue of $T$ differs from the multiplicity as an eigenvalue of $T \setminus \{u_1, u_2,\ldots, u_l\}$ by at most $l.$ Thus, $\alpha$ is an eigenvalue of $T$ with multiplicity at least
	\[k_1+ k_2+\cdots +k_l-l = (k_1-1)+(k_2-1)+\cdots +(k_l-1).\]
\end{proof}

As a simple corollary, we have the following known bound for the eigenvalue $0$. Let $l(T)$ be the number of leaves in the tree $T$, and let $q(T)$ be the number of quasipendant vertices (a vertex is called quasipendant if it is adjacent to a leaf) in the tree $T$.
\begin{cor}[{see e.g.~\cite[p.~258]{CDSSpectra}}]\label{Cor:leavesVsquasiAdj}
	The multiplicity of the eigenvalue $0$ in the adjacency spectrum of a tree $T$ is at least $l(T)-q(T).$
\end{cor}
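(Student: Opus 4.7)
The plan is to derive the corollary as an immediate special case of Theorem~\ref{Thm:DiffVecticesAdj}. Take $H$ to be the single-vertex tree $\bullet$, whose adjacency spectrum consists of the single eigenvalue $0$ (with multiplicity $1$). The construction in Theorem~\ref{Thm:DiffVecticesAdj} of attaching copies of $H$ to various vertices of $G$ then amounts to attaching pendant vertices (leaves) to a set of vertices of $G$, which is exactly the relationship between a tree $T$ and its quasipendant vertices.

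Concretely, I would let $q = q(T)$ and enumerate the quasipendant vertices of $T$ as $u_1, u_2, \ldots, u_q$. For each $i$, let $k_i \geq 1$ denote the number of leaves of $T$ adjacent to $u_i$; then by the definition of a quasipendant vertex, every leaf is accounted for exactly once, so $\sum_{i=1}^{q} k_i = l(T)$. Let $G$ be the subgraph of $T$ induced by the non-leaf vertices; since $T$ is a tree in which every leaf hangs off exactly one quasipendant, $T$ is obtained from $G$ by attaching $k_i$ copies of $H = \bullet$ to each $u_i$ via a single edge. Theorem~\ref{Thm:DiffVecticesAdj} then yields that the multiplicity of $0$ in the spectrum of $T$ is at least
\[
\sum_{i=1}^{q} (k_i - 1) \;=\; l(T) - q(T).
\]

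The only thing requiring care is the degenerate cases in which $G$ might be empty or disconnected, namely when $T$ is a single vertex or $T = P_2$; in both cases $l(T) - q(T) \leq 0$, so the bound holds trivially, and in all remaining cases $G$ is a nonempty subtree of $T$ (the ``internal tree'') so that the framework of Theorem~\ref{Thm:DiffVecticesAdj} applies directly. I expect no real obstacle here; the content of the corollary is entirely absorbed into Theorem~\ref{Thm:DiffVecticesAdj}, with the minor bookkeeping step being the identification of the quasipendant vertices as the anchor vertices $u_i$ and the verification that the leaf counts $k_i$ sum to $l(T)$.
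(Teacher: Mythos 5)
Your proof is correct and follows the paper's argument exactly: take $H$ to be a single vertex, let the $u_i$ be the quasipendant vertices with $k_i$ attached leaves, and apply Theorem~\ref{Thm:DiffVecticesAdj} to get $\sum_i (k_i - 1) = l(T) - q(T)$. The only addition is your explicit handling of the degenerate small cases, which the paper leaves implicit.
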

\begin{proof}
This is the special case of Theorem~\ref{Thm:DiffVecticesAdj} where $H$ is a single vertex and $u_1,u_2,\ldots$ are the quasipendant vertices of $T$. If there are $l_i$ leaves attached to $u_i$, then by Theorem
\ref{Thm:DiffVecticesAdj}, the multiplicity of the eigenvalue 0 is at least 
	\[\sum _{i=1}^m(l_i-1)= \sum _{i=1}^ml_i- m =l(T)-q(T).\] 
\end{proof}

In particular, let $H$ be a rooted tree that has $\alpha$ as an eigenvalue, and let $K$ be obtained by joining two copies of $H$ to a common root. The presence of $K$ as a fringe subtree in $T$ ``forces'' $\alpha$ to be an eigenvalue of $T$. More generally, if $K$ occurs $r$ times as a fringe subtree of $T$, then Theorem~\ref{Thm:DiffVecticesAdj} shows that $\alpha$ is an eigenvalue of multiplicity at least $r$:
$N_{\alpha}(T) \geq r$. The following is now immediate from Proposition~\ref{prop:occurrences}:

\begin{cor}
Let $H$ be a rooted tree that has $\alpha$ as an eigenvalue, and let $K$ be obtained by joining two copies of $H$ to a common root. Then we have
$$\mu_{\rec,K} \leq \mu_{\rec,\alpha}.$$
\end{cor}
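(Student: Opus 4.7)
The plan is to combine the deterministic lower bound $N_\alpha(T) \geq X_K(T)$, where $X_K(T)$ denotes the number of fringe copies of $K$ in $T$, with the asymptotic mean formula for fringe subtree counts supplied by Proposition~\ref{prop:occurrences}. The corollary is really a statement about expectations, so once the pointwise inequality is in hand one just divides by $n$ and sends $n\to\infty$.

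First I would verify the inequality $N_\alpha(T)\geq X_K(T)$ carefully, which is the only step with any content. Let $v_1,\ldots,v_r$ be the distinct roots of the fringe copies of $K$ in $T$. Since $|K|=2|H|+1$ and any proper fringe subtree of $K$ has strictly fewer vertices, no $v_j$ can be a strict descendant of another $v_i$: otherwise $T_{v_j}\cong K$ would embed as a proper fringe subtree of $T_{v_i}\cong K$, which is impossible by size. Hence the subtrees $T_{v_i}$ are pairwise vertex-disjoint. By construction of $K$, each $v_i$ has exactly two children $a_i,b_i$ with $T_{a_i}\cong T_{b_i}\cong H$. Let $G$ be the subtree of $T$ obtained by deleting the vertex sets of all the $T_{a_i}$ and $T_{b_i}$ while retaining each $v_i$. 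Then $G$ is a tree containing $v_1,\ldots,v_r$, and $T$ is recovered from $G$ by attaching $k_i=2$ copies of $H$ to each $u_i:=v_i$. Theorem~\ref{Thm:DiffVecticesAdj} then yields
\[
N_\alpha(T) \;\geq\; \sum_{i=1}^{r}(k_i-1) \;=\; r \;=\; X_K(T).
\]

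Next I would take expectations in $N_\alpha(\Rec_n)\geq X_K(\Rec_n)$ and divide by $n$. On the right, Proposition~\ref{prop:occurrences} gives $\Ex(X_K(\Rec_n))=\mu_{\rec,K}\,n+\Oh(1)$. On the left, Theorem~\ref{Thm:CLT} (as applied in Theorem~\ref{thm:mult-CLT}) yields $\Ex(N_\alpha(\Rec_n))/n\to\mu_{\rec,\alpha}$. Letting $n\to\infty$ gives $\mu_{\rec,K}\leq\mu_{\rec,\alpha}$, which is the claimed bound.

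The only genuine obstacle is the disjointness of the fringe copies of $K$, since Theorem~\ref{Thm:DiffVecticesAdj} is phrased for attaching \emph{different} copies of $H$ to a base tree $G$. This is settled cleanly by the size identity $|K|=2|H|+1$, which forces the roots $v_1,\ldots,v_r$ to be pairwise incomparable in the ancestor order. Once this is observed, the rest of the argument is a direct appeal to Theorem~\ref{Thm:DiffVecticesAdj} followed by a one-line comparison of expectations via Proposition~\ref{prop:occurrences}.
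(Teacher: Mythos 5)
Your proposal is correct and follows essentially the same route as the paper: establish the pointwise bound $N_\alpha(T) \geq X_K(T)$ via Theorem~\ref{Thm:DiffVecticesAdj}, then compare the linear-in-$n$ growth of the two expectations using Proposition~\ref{prop:occurrences}. The paper states this compression in a single sentence, whereas you have filled in the (correct) detail that distinct fringe copies of $K$ have pairwise incomparable roots by a size argument, which is indeed what justifies the direct application of Theorem~\ref{Thm:DiffVecticesAdj}.
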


In particular, $\mu_{\rec,\alpha} > 0$ as soon as $\alpha$ is an eigenvalue of some tree. Analogous corollaries hold for binary increasing trees and for Laplacian eigenvalues. However, it is worth noting that the bound above is generally quite weak: while the presence of $K$ as a fringe subtree implies that $\alpha$ is an eigenvalue, the converse is not true. Moreover, there generally exist several non-isomorphic ``forcing subtrees'' like $K$ corresponding to any particular eigenvalue $\alpha$.

\subsection{Positivity of the variance constants}

We can use a similar argument to show that the variance constants in Theorem~\ref{thm:mult-CLT} are always strictly positive (in the general theorem of Holmgren and Janson, it is possible that the variance constants $\sigma_F$ and $\hat{\sigma}_F$ are $0$). We exhibit this for eigenvalues of recursive trees. The case of Laplacian eigenvalues is similar, as is the argument for binary increasing trees.

Let $\alpha$ be a fixed real number that is a possible eigenvalue of a tree. Using the ``forcing subtrees'' argument from the previous section, we can construct trees for which the multiplicity of $\alpha$ is arbitrarily large. In particular, we can find a rooted tree $S$ with $N_{\alpha}(S) \geq 4$. On the other hand, we let $S'$ be a path with the same number of vertices as $S$. Since paths do not have repeated eigenvalues (the eigenvalues of the $n$-vertex path are $2\cos \frac{\pi k}{n+1}$, $k \in \{1,2,\ldots,n\}$, see e.g.~\cite[p.~47]{DPSGraphSpectra}), we have $N_{\alpha}(S') \leq 1$.

Now let $T$ be an arbitrary tree, and suppose that $S$ occurs as a fringe subtree of $T$; in other words, $T$ can be obtained by attaching $S$ to a vertex $v$ of a rooted tree $R$. From arguments that we used before, it follows that
$$N_{\alpha}(T) \geq N_{\alpha}(R-v) + N_{\alpha}(S) - 1.$$
Let $T'$ be obtained by replacing $S$ with $S'$. By the same reasoning, we have
$$N_{\alpha}(T') \leq N_{\alpha}(R-v) + N_{\alpha}(S') + 1.$$
Combined with our assumptions on $N_{\alpha}(S)$ and $N_{\alpha}(S')$, this gives us
$$N_{\alpha}(T') \leq N_{\alpha}(T) -1.$$
Now let us consider a random recursive tree $\Rec_n$ with $n$ vertices, and let us replace all occurrences of either $S$ or $S'$ as a fringe subtree by a marked leaf to obtain a reduced tree $\Rec_n'$. As a consequence of Theorem~\ref{Thm:CLT} and Proposition~\ref{prop:occurrences}, the number of marked leaves in this reduced tree satisfies a central limit theorem with mean $(\mu_{\rec,S} + \mu_{\rec,S'})n +\Oh(1)$. Conditioned on its size, each fringe subtree of a random recursive tree is again a random recursive tree. Therefore, if we condition on the shape of the reduced tree $\Rec_n'$ obtained from a random recursive tree $\Rec_n$ and let the number of marked leaves in the reduced tree be $M_n$, then the number of fringe subtrees represented by marked leaves that are isomorphic to $S$ follows a binomial distribution $\operatorname{Bin}(M_n,p)$, where $p = \frac{\beta_{\rec,S}}{\beta_{\rec,S}+\beta_{\rec,S'}}$. Since replacing a fringe subtree isomorphic to $S$ by a fringe subtree isomorphic to $S'$ decreases the multiplicity of $\alpha$ as an eigenvalue by at least $1$, the variance of $N_{\alpha}(\Rec_n)$, conditioned on the value of $M_n$, is at least the variance of this binomial distribution, which is $p(1-p)M_n$. It now follows from the law of total variance that
\begin{align*}
    \Var (N_{\alpha}(\Rec_n)) &= \Ex (\Var(N_{\alpha}(\Rec_n)|M_n)) + \Var(\Ex (N_{\alpha}(\Rec_n)|M_n)) \\
    &\geq \Ex (\Var(N_{\alpha}(\Rec_n)|M_n)) \\
    &\geq \Ex(p(1-p)M_n) \\
    &= p(1-p)(\mu_{\rec,S} + \mu_{\rec,S'})n +\Oh(1),
\end{align*}
which implies that $\sigma^2_{\rec,\alpha} \geq p(1-p)(\mu_{\rec,S} + \mu_{\rec,S'}) > 0$. This shows that the variance constant is always positive, so that the multiplicity of $\alpha$ as an eigenvalue, suitably normalised, converges weakly to a non-degenerate normal distribution.

\section{The special case of the eigenvalue $0$}\label{sec:EV0}

The eigenvalue $0$ of the adjacency matrix is special for several reasons. Since the spectrum of every tree is symmetric, the multiplicity of $0$ as an eigenvalue is even if and only if the number of vertices is. This implies that the toll function $n_0$ defined in Section~\ref{sec:toll} can only take the values $-1$ and $1$, as opposed to the generic case where $n_{\alpha}$ can also attain the value $0$.

\medskip

One can also recursively characterise the toll function $n_0$, which will allow us to explicitly determine the mean constants $\mu_{\rec,0}$ and $\mu_{\bin,0}$ as well as the variance constants $\sigma^2_{\rec,0}$ and $\sigma^2_{\bin,0}$ using methods from analytic combinatorics.

\medskip

To this end, let $T$ be a rooted tree with root $r$, and let $\Psi (T,z)$  and $\Psi _r (T,z)$ be the characteristic polynomials of $T$ and $T-r$ respectively. Note that
$n_0(T) = 1$ if and only if $\lim_{z \to 0^+} \frac{\Psi_r(T,z)}{\Psi(T,z)} = \pm \infty$, and $n_0(T) = -1$ if and only if $\lim_{z \to 0^+} \frac{\Psi_r(T,z)}{\Psi(T,z)} = 0$.

\medskip

The ratio $\frac{\Psi_r(T,z)}{\Psi(T,z)}$ can be related to the \emph{angles} of $T$. Suppose that $\{\e_1 ,\e_2 , \ldots , \e_n\}$ are the natural basis vectors, that $\lambda_1,\lambda_2,\ldots,\lambda_m$ are the distinct eigenvalues of $T$, and that the adjacency matrix $A(T)$ has the spectral decomposition
\[A(T) = \lambda_1 P_1 + \lambda_2 P_2 + \cdots + \lambda_m P_m,\]
where $P_i$ represents the orthogonal projection of $\mathbb{R}^n$ onto the eigenspace of $\lambda_i$ with respect to the basis $\{\e_1 ,\e_2 , \ldots , \e_n\}$. The values of $\theta_{ij} =||P_ie_j||$, where $1\leq i\leq m$ and $1\leq j\leq n,$ are called the angles of $T$. If the $j$-th column corresponds to a specific vertex $v$, then we say that $\theta_{ij}$ are the angles corresponding to $v$.

\begin{thm}[{see \cite[p.~33]{DPSGraphSpectra}}]\label{Thm:Ratio_CharPoly_angles}
	Let $\Psi (T,z)$  and $\Psi_r (T,z)$ be the characteristic polynomials of a rooted tree $T$ with root $r$ and the forest $T-r$ respectively. Suppose that $\lambda_1, \lambda_2, \ldots \lambda_m$ are the distinct eigenvalues of $T$ and $\theta_{ir}$ are the angles corresponding to $r$. Then 
	\[\frac{\Psi _r(T,z)}{\Psi (T,z)}= \sum_{i=1}^m  \frac{\theta _{ir}^2}{z-\lambda _i}. \]
\end{thm}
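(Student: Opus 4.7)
The plan is to recognise both sides as the $(r,r)$-entry of the resolvent $(zI-A)^{-1}$, where $A=A(T)$ is the adjacency matrix of $T$. The identity then follows by computing this entry in two different ways: via the cofactor formula (which gives the left-hand side) and via the spectral decomposition (which gives the right-hand side).

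First I would label the vertices so that the root $r$ corresponds to the last row and column (the labelling is inessential since the diagonal entry $((zI-A)^{-1})_{rr}$ does not depend on it). By Cramer's rule applied to the adjugate, the $(r,r)$-entry of $(zI-A)^{-1}$ equals the cofactor $\det(zI'-A_r)/\det(zI-A)$, where $A_r$ is the principal submatrix of $A$ obtained by deleting the row and column of $r$, and $I'$ is the identity of the appropriate smaller size. Since $A_r$ is precisely the adjacency matrix of the forest $T-r$, this cofactor is exactly $\Psi_r(T,z)/\Psi(T,z)$.

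Next I would substitute the spectral decomposition $A=\sum_{i=1}^m\lambda_i P_i$ into the resolvent. Because the $P_i$ are pairwise orthogonal projections summing to the identity and commuting with $A$, one has
\[
(zI-A)^{-1}=\sum_{i=1}^m\frac{1}{z-\lambda_i}\,P_i,
\]
valid for $z\notin\{\lambda_1,\ldots,\lambda_m\}$. Evaluating the $(r,r)$-entry by sandwiching between $\e_r$ on both sides and using $P_i^{\top}P_i=P_i^2=P_i$ gives
\[
\e_r^{\top}(zI-A)^{-1}\e_r=\sum_{i=1}^m\frac{\e_r^{\top}P_i\e_r}{z-\lambda_i}=\sum_{i=1}^m\frac{\|P_i\e_r\|^2}{z-\lambda_i}=\sum_{i=1}^m\frac{\theta_{ir}^2}{z-\lambda_i}.
\]
Equating the two expressions for $((zI-A)^{-1})_{rr}$ yields the claimed identity for all $z$ outside the spectrum, and since both sides are rational functions of $z$, the equality holds as an identity of rational functions.

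There is really no hard step here; the whole argument is routine linear algebra. The only small point that deserves care is the justification that $\e_r^{\top}P_i\e_r=\|P_i\e_r\|^2$, which uses the symmetry and idempotence of the orthogonal projection $P_i$ (both guaranteed because $A$ is symmetric and $P_i$ is the orthogonal projection onto the $\lambda_i$-eigenspace). A reader might also want a sentence verifying that the cofactor identity indeed produces $\Psi_r(T,z)$ as written, i.e. the characteristic polynomial of the forest $T-r$ — this is immediate from the definition of a principal minor together with the convention $\Psi_r(T,z)=\det(zI'-A_r)$.
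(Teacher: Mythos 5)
The paper states this theorem by citation to \cite[p.~33]{DPSGraphSpectra} and does not reproduce a proof, so there is no in-paper argument to compare against. Your derivation --- evaluating the $(r,r)$-entry of the resolvent $(zI-A)^{-1}$ once via the adjugate/cofactor formula to get $\Psi_r(T,z)/\Psi(T,z)$, and once via the spectral decomposition $\sum_i (z-\lambda_i)^{-1}P_i$ together with $\e_r^{\top}P_i\e_r=\|P_i\e_r\|^2=\theta_{ir}^2$ --- is the standard proof of this identity, and it is correct and complete.
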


This implies in particular that $\lim_{z \to 0^+} \frac{\Psi _r(T,z)}{\Psi (T,z)}$ cannot be $-\infty$, since all numerators are non-negative. The limit is therefore either $0$ or $+\infty$.

\medskip

Now we apply the following recursion for the characteristic polynomial due to Mohar \cite{BMohar89}:

\begin{thm}[see~\cite{BMohar89}]\label{Thm:Ratio_CharPoly}
	Let $\Psi (T,z)$  and $\Psi _r (T,z)$ be the characteristic polynomials of a tree $T$ and the forest $T-r$ respectively, where $r$ is the root of $T$. Suppose $T_1,T_2, \ldots ,T_k$ are the branches of $T$ with $v_1, v_2, \ldots , v_k$ their respective roots. Then 
	\begin{align}
	\frac{\Psi _r(T,z)}{\Psi (T,z)} = \frac{1}{z - \sum_{j=1}^k \frac{\Psi _{v_j}(T_j,z)}{\Psi (T_j,z)}}. \label{Eqn:Ratio_CharPoly_in_terms_of_eig_product}
	\end{align}
\end{thm}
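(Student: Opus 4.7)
The plan is to derive the identity via the Schur complement applied to the matrix $zI - A(T)$, partitioned according to the root $r$. I would order the vertices of $T$ so that $r$ appears first, followed by the vertices of $T_1$, then $T_2$, and so on. With this ordering,
\[
zI - A(T) = \begin{pmatrix} z & -\mathbf{b}^{T} \\ -\mathbf{b} & zI - A(T-r) \end{pmatrix},
\]
where $\mathbf{b}$ is the indicator vector of the neighbours of $r$, i.e., the roots $v_1, \ldots, v_k$ of the branches. Since $T - r$ is the disjoint union of $T_1, \ldots, T_k$, the lower-right block is itself block diagonal with blocks $zI - A(T_j)$, and in particular $\det(zI - A(T-r)) = \prod_{j=1}^{k} \Psi(T_j, z) = \Psi_r(T,z)$.

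Next I would apply the standard Schur complement determinant identity
\[
\Psi(T,z) = \det(zI - A(T-r))\,\bigl(z - \mathbf{b}^{T}(zI - A(T-r))^{-1}\mathbf{b}\bigr),
\]
which holds whenever $zI - A(T-r)$ is invertible. Because the inverse is block diagonal and $\mathbf{b}$ has a single nonzero entry in each block (at the coordinate of $v_j$), the quadratic form collapses to a sum of diagonal entries,
\[
\mathbf{b}^{T}(zI - A(T-r))^{-1}\mathbf{b} = \sum_{j=1}^{k} \bigl[(zI - A(T_j))^{-1}\bigr]_{v_j, v_j}.
\]
By Cramer's rule, the diagonal entry $\bigl[(zI - A(T_j))^{-1}\bigr]_{v_j, v_j}$ is the ratio of the $(v_j, v_j)$-cofactor of $zI - A(T_j)$ to its determinant, which is exactly $\Psi_{v_j}(T_j, z)/\Psi(T_j, z)$. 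Substituting back and dividing yields the stated identity.

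The only subtlety is that the Schur complement step requires $zI - A(T-r)$ to be invertible, which fails at the (finitely many) eigenvalues of $T - r$. Since both sides of the target equation are rational functions of $z$, this is automatic: the equality holds on a cofinite subset of $\mathbb{C}$ and therefore as an identity of rational functions. Alternatively, one can sidestep the matrix inversion by clearing denominators and verifying the polynomial identity
\[
\Psi(T,z)\prod_{j=1}^{k}\Psi(T_j,z) = \Psi_r(T,z)\Bigl(z\prod_{j=1}^{k}\Psi(T_j,z) - \sum_{j=1}^{k}\Psi_{v_j}(T_j,z)\prod_{i\neq j}\Psi(T_i,z)\Bigr),
\]
which follows from a Laplace expansion of $\det(zI - A(T))$ along the row of $r$ (the tree structure ensures that each neighbour $v_j$ of $r$ lies in a distinct component of $T-r$, so no cycle corrections appear). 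I do not expect any deep obstacle; the main care is in setting up the block ordering so that $\mathbf{b}$ interacts cleanly with the block-diagonal inverse, and in handling the removable singularities at eigenvalues of $T-r$.
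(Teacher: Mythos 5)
The paper does not prove this theorem; it is stated as a known result and attributed to Mohar~\cite{BMohar89}, so there is no in-paper proof to compare against. Your blind derivation via the Schur complement is correct and complete: the block partition with $r$ first, the factorisation $\Psi(T,z) = \Psi_r(T,z)\bigl(z - \mathbf{b}^T(zI-A(T-r))^{-1}\mathbf{b}\bigr)$, the collapse of the quadratic form to a sum of diagonal resolvent entries because $T-r$ is a disjoint union with one neighbour of $r$ in each component, and the identification of $\bigl[(zI-A(T_j))^{-1}\bigr]_{v_j,v_j}$ with $\Psi_{v_j}(T_j,z)/\Psi(T_j,z)$ via Cramer's rule are all sound, and you rightly dispose of the invertibility caveat by observing that both sides are rational in $z$. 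One small remark: the fallback argument ``by Laplace expansion along the row of $r$'' is stated a bit too briefly---expanding along that row gives minors in which the column of $r$ still has several nonzero entries, so a second expansion (along the column of $r$) and an accounting of signs is needed before one arrives at the cleared-denominator identity; for a tree this works out and coincides with the classical vertex-deletion recursion $\Psi(T,z) = z\,\Psi(T-r,z) - \sum_{j}\Psi(T-r-v_j,z)$, but as written it would benefit from that extra step. Since your primary Schur-complement argument stands on its own, this is a cosmetic point rather than a gap.
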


This readily shows that if $\lim_{z \to 0^+} \frac{\Psi _{v_j}(T_j,z)}{\Psi (T_j,z)} = 0$ for all $j$, we have $\lim_{z \to 0^+} \frac{\Psi_r(T,z)}{\Psi (T,z)} = + \infty$. In other words, if $n_0(T_j) = -1$ for all $j$, then $n_0(T) = 1$. On the other hand, if at least one of the fractions $\frac{\Psi_r(T,z)}{\Psi (T,z)}$ tends to $+\infty$ as $z \to 0^+$ (equivalently, if $n_0(T_j) = 1$ for at least one index $j$), then the denominator in~\eqref{Eqn:Ratio_CharPoly_in_terms_of_eig_product} tends to $-\infty$, and we have $\lim_{z \to 0^+} \frac{\Psi_r(T,z)}{\Psi (T,z)} = 0$, thus $n_0(T) = -1$.

\medskip

In summary,
\begin{equation}\label{eq:n0rec}
n_0(T) = \begin{cases} 1 & \text{if } n_0(T_j) = -1 \text{ for all $j$,} \\
-1 & \text{if } n_0(T_j) = 1 \text{ for at least one $j$.} \end{cases}
\end{equation}
This recursion will be translated to functional equations for generating functions in the following sections.

\subsection{Recursive trees}

In this section, we focus on the multiplicity of $0$ in the spectrum of recursive trees. As indicated earlier, recursive trees are increasing trees that can be constructed by a growth or evolution process, where each new vertex is attached to one of the previous vertices uniformly at random. As there are $n-1$ possibilities when the $n$-th vertex is attached, there are $(n-1)!$ different recursive trees.

\medskip

By marking the root of a recursive tree with the lowest label, a recursive tree can symbolically be defined by a boxed product. That is, a marked node attached to a set of recursive trees. If we let $Y(x)$ be the exponential generating function, this symbolic definition translates to the differential equation
\[Y'(x) = \exp(Y(x)), \quad \text{where  } Y(0)=0,\]
see Section~\ref{sec:increasing}. This differential equation has the explicit solution $Y(x) = -\log(1-x)$,
which is consistent with the earlier observation that there are $(n-1)!$ recursive trees with $n$ vertices.

We now define a bivariate exponential generating function $Y(x,t)$ that takes into account the multiplicity of the eigenvalue 0:
\[Y(x,t) = \sum _{T \in \RT} e^{tN_0(T)}\frac{x^{|T|}}{|T|!},\] where the sum is over the set $\RT$ of all recursive trees. Of course, $Y(x,0) = Y(x) = -\log (1-x)$.

Based on the recursive characterisation of $n_0$ in~\eqref{eq:n0rec}, we divide the set of all recursive trees into the sets $\RT_+$ and $\RT_-$ of trees satisfing $n_0(T) = 1$ and $n_0(T) = -1$, respectively. We also let $Y_+(x,t)$ and $Y_-(x,t)$ be the respective associated exponential generating functions, defined in the same way as $Y(x,t)$ above. The recursion~\eqref{eq:n0rec} translates to the system of equations
\begin{align*}
\frac{\partial}{\partial x} Y_+(x,t) &= \exp\big(t + Y_{-}(x,t) \big), \\
\frac{\partial}{\partial x} Y_-(x,y) &= \exp\big({-t} + Y(x,t)\big) - \exp\big({-t} + Y_-(x,t)\big) \\
&= \exp\big({-t} + Y_-(x,t)\big) \big( \exp \big( Y_+(x,t) \big) - 1 \big).
\end{align*}
This system of partial differential equations does not seem to have an explicit solution. However, in order to determine mean and variance of $N_0(\Rec_n)$, we only need the coefficients of $x^n$ in the partial derivatives $\frac{\partial}{\partial t} Y(x,t) \Big|_{t=0}$ and $\Big( \frac{\partial}{\partial t} \Big)^2 Y(x,t) \Big|_{t=0}$, and these satisfy ordinary differential equations that we will actually be able to solve.

In order to simplify some of the equations that follow, we introduce the abbreviations $F(x) = Y(x,0) = - \log(1-x)$, $F_{\pm}(x) = Y_{\pm}(x,0)$ as well as \[G(x) = \frac{\partial}{\partial t} Y(x,t) \Big|_{t=0},\ G_{\pm}(x) = \frac{\partial}{\partial t} Y_{\pm}(x,t) \Big|_{t=0}\] and \[H(x) = \Big( \frac{\partial}{\partial t} \Big)^2 Y(x,t) \Big|_{t=0},\ H_{\pm}(x) = \Big( \frac{\partial}{\partial t} \Big)^2 Y_{\pm}(x,t) \Big|_{t=0}.\]
Let us start by determining $F_+(x)$ and $F_-(x)$. Setting $t=0$ in the system of differential equations gives us
\begin{align*}
F'_+(x) &= \exp\big(F_-(x) \big), \\
F'_-(x) &= \exp\big(F_-(x)\big) \big( \exp \big( F_-(x) \big) - 1 \big).
\end{align*}
Since we know that $F_-(x) = F(x) - F_+(x) = -\log(1-x) - F_+(x)$, the first of these equations yields
\[F'_+(x) = \frac{\exp(-F_+(x))}{1-x}.\]
This separable differential equation with initial condition $F_+(0) = 0$ has the explicit solution
\[
F_+(x) = \log(1 - \log(1-x)),
\]
and we also get
\[
F_-(x) = F(x) - F_+(x) = -\log \big( (1-x)(1-\log(1-x))\big).
\]
Next we consider the first derivatives. Differentiating the system of differential equations for $Y_+(x,t)$ and $Y_-(x,t)$ with respect to $t$ and plugging in $t=0$, we obtain
\begin{align*}
G'_+(x) &= \exp\big(F_{-}(x) \big) \big( 1 + G_{-}(x) \big), \\
G'_-(x) &= \exp\big(F_{-}(x) + F_{+}(x)\big) \big({-1} + G_{-}(x) + G_{+}(x) \big) - \exp\big(F_{-}(x) \big) \big({-1} + G_{-}(x) \big).
\end{align*}
Adding these two and plugging in the formulas for $F(x)$ and $F_-(x)$, we obtain
\[G'(x) = \exp(F(x)) (G(x) - 1) + 2 \exp \big(F_{-}(x) \big) = \frac{G(x)}{1-x} - \frac{1}{1-x} + \frac{2}{(1-x)(1-\log(1-x))}.\]
Multiplying by $1-x$ and regrouping gives
\[\frac{d}{dx} \big( (1-x)G(x) \big) = (1-x) G'(x) - G(x) = \frac{1+ \log(1-x)}{1-\log(1-x)},\]
hence we have (taking the initial condition $G(0) = 0$ into account)
\[G(x) = \frac{1}{1-x} \int_0^x \frac{1+\log(1-u)}{1-\log(1-u)}\,du.\]
At this point, let us introduce the function
\[A(x) = \int_0^x \frac{1}{1-\log(1-u)} \,du\]
and note some of its properties: we have $A(0) = 0$, $A(1) = G$, where $G$ denotes the Euler--Gompertz constant \cite[pp.~425--426]{Finch2002mathematical}, as well as $A'(x) = \frac{1}{1-\log(1-x)}$. For later use, we also need its asymptotic behaviour as $x \to 1$. Integration by parts yields
\begin{align*}
A(x) &= G - \int_x^1 \frac{1}{1-\log(1-u)} \,du = G - \int_0^{1-x} \frac{1}{1-\log u}\,du \\
&= G - \frac{u}{1-\log u} \Big|_0^{1-x} + \int_0^{1-x}\frac{1}{(1-\log u)^2}\,du \\
&= G - \frac{1-x}{1-\log(1-x)} + \int_0^{1-x}\frac{1}{(1-\log u)^2}\,du.
\end{align*}
So it follows that
\[A(x) = G + \frac{1-x}{\log(1-x)} + \Oh \Big( \Big| \frac{1-x}{\log^2(1-x)} \Big| \Big)\]
as $x \to 1$. In terms of this function, we can express $G(x)$ as
\[G(x) = \frac{1}{1-x} \int_0^x \frac{1+\log(1-u)}{1-\log(1-u)}\,du = \frac{1}{1-x} \int_0^x \Big( \frac{2}{1-\log(1-u)} - 1 \Big) \,du = \frac{2A(x) - x}{1-x}.\]
We will also need $G_-(x)$ later, so let us return to the differential equation
\[G'_+(x) = \exp\big(F_{-}(x) \big) \big( 1 + G_{-}(x) \big),\]
which is equivalent to
\begin{align*}
G'_-(x) &= G'(x) - G'_+(x) = \frac{2A'(x)-1}{1-x} + \frac{2A(x)-x}{(1-x)^2} - \frac{1}{(1-x)(1-\log(1-x))} \big(1 + G_{-}(x) \big) \\
&= \frac{2A(x)-1}{(1-x)^2} + \frac{1}{(1-x)(1-\log(1-x))} \big(1 - G_{-}(x) \big).
\end{align*}
Multiplying by the integrating factor $1-\log(1-x)$ and rearranging yields
\[\frac{d}{dx} \big((1-\log(1-x))G_{-}(x)\big) = \frac{(2A(x)-1)(1-\log(1-x))}{(1-x)^2} + \frac{1}{1-x}.\]
We integrate both sides and apply integration by parts to obtain
\begin{align*}
(1-\log(1-x))G_{-}(x) &= -\frac{(2A(t)-1)\log(1-t)}{1-t} \Big|_0^x + \int_0^x \frac{2A'(u)\log(1-u)}{1-u}\,du - \log(1-x)\\
&= -\frac{(2A(x)-1)\log(1-x)}{1-x} + \log(1-x) + 2\log(1-\log(1-x))
\end{align*}
and finally
\begin{equation}\label{eq:G-}
G_{-}(x) = -\frac{(2A(x)-1)\log(1-x)}{(1-x)(1-\log(1-x))} + \frac{\log(1-x) + 2\log(1-\log(1-x))}{(1-\log(1-x))}.
\end{equation}
Now we have all ingredients to determine $H(x) = \Big( \frac{\partial}{\partial t} \Big)^2 Y(x,t) \Big|_{t=0}$ as well. We differentiate the system of differential equations for $Y_{\pm}(x,t)$ twice with respect to $t$, then plug in $t=0$ and add the two equations. After some simplifications, this gives us
\begin{align*}
H'(x) &= e^{F(x)} \big(H(x) + (G(x)-1)^2 \big) + 4 e^{F_-(x)}G_{-}(x) \\
&= \frac{H(x)}{1-x} + \frac{(2A(x)-1)^2}{(1-x)^3} + \frac{4G_{-}(x)}{(1-x)(1-\log(1-x))}.
\end{align*}
Again, we can solve this differential equation by multiplying by the integrating factor $1-x$ and integrating, which finally gives us the expression
\[H(x) = \frac{1}{1-x} \int_0^x \Big( \frac{(2A(u)-1)^2}{(1-u)^2} + \frac{4G_{-}(u)}{1-\log(1-u)} \Big)\,du.\]
Using~\eqref{eq:G-} and the properties of the function $A(x)$ mentioned previously, we find that
\[\frac{(2A(u)-1)^2}{(1-u)^2} + \frac{4G_{-}(u)}{1-\log(1-u)} = \frac{(2G-1)^2}{(1-u)^2} + \Oh \Big( \Big| \frac{1}{(1-u)\log^2(1-u)} \Big| \Big)\]
as $u \to 1$. This means in particular that the integral
\begin{equation}\label{eq:K1}
K_1 = \int_0^1 \Big( \frac{(2A(u)-1)^2}{(1-u)^2} + \frac{4G_{-}(u)}{1-\log(1-u)} - \frac{(2G-1)^2}{(1-u)^2} \Big)\,du
\end{equation}
is convergent, and we can write
\begin{align*}
H(x) &= \frac{(2G-1)^2x}{(1-x)^2} + \frac{1}{1-x} \int_0^x \Big( \frac{(2A(u)-1)^2}{(1-u)^2} + \frac{4G_{-}(u)}{1-\log(1-u)} - \frac{(2G-1)^2}{(1-u)^2} \Big)\,du \\
&= \frac{(2G-1)^2x}{(1-x)^2} + \frac{K_1}{1-x} + \frac{1}{1-x} \int_x^1 \Big( \frac{(2A(u)-1)^2}{(1-u)^2} + \frac{4G_{-}(u)}{1-\log(1-u)} - \frac{(2G-1)^2}{(1-u)^2} \Big)\,du.
\end{align*}
We are now in a position to determine the asymptotic behaviour of the coefficients by means of singularity analysis \cite[Chapter VI]{FlajoletRobert}. All functions we have been dealing with have a dominant singularity at $1$ (i.e., there is no other singularity whose modulus is less than or equal to $1$). Due to the closure properties of functions amenable to singularity analysis (see \cite[Section VI.10]{FlajoletRobert}), we can apply singularity analysis to all of them. Specifically, the asymptotic expansion of $A(x)$ translates to an expansion for $G(x)$:
\[G(x) = \frac{2A(x) - x}{1-x} = \frac{2G-1}{1-x} + 1 + \frac{1}{\log(1-x)} + \Oh \Big( \Big| \frac{1}{\log^2(1-x)} \Big| \Big).\]
This translates to an asymptotic formula for the coefficients of $G(x)$ and thus the mean of $N_0$. We have
\[\Ex (N_0(\Rec_n)) = \frac{[x^n] G(x)}{[x^n] F(x)} = n [x^n] G(x) = (2G-1)n + \Oh \Big( \frac{1}{\log n} \Big).\]
Let us state this as an explicit proposition.

\begin{prop}
The mean multiplicity of the eigenvalue $0$ of a random recursive tree with $n$ vertices is
\[\Ex (N_0(\Rec_n)) = (2G-1)n + \Oh \Big( \frac{1}{\log n} \Big).\]
\end{prop}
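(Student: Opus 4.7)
The plan is to feed the generating-function identity set up in this section into the standard singularity-analysis toolbox. Since $F(x) = -\log(1-x)$ satisfies $[x^n] F(x) = 1/n$, the formula $\Ex (N_0(\Rec_n)) = n \cdot [x^n] G(x)$ already displayed in the excerpt leaves only the coefficient asymptotics of $G(x) = (2A(x) - x)/(1-x)$ to be worked out.

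First I would substitute into $G(x)$ the expansion of $A(x)$ near its dominant singularity $x = 1$ that was obtained above by integration by parts,
\[A(x) = G + \frac{1-x}{\log(1-x)} + \Oh\Big(\frac{|1-x|}{\log^2(1-x)}\Big),\]
which upon dividing by $1-x$ and simplifying yields
\[G(x) = \frac{2G-1}{1-x} + 1 + \Oh\Big(\frac{1}{|\log(1-x)|}\Big)\]
in a sector around $x = 1$.

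The second step is to invoke the transfer theorems (Flajolet--Sedgewick, Chapter VI). The simple pole $\frac{2G-1}{1-x}$ contributes exactly $2G-1$ to $[x^n] G(x)$; the constant $1$ contributes nothing for $n \geq 1$; and the soft singular term $\Oh(1/|\log(1-x)|)$ lies in the logarithmic scale, so its coefficients are of order $\Oh(1/(n \log^2 n))$. Multiplying by $n$ gives
\[\Ex (N_0(\Rec_n)) = (2G-1)n + \Oh\Big(\frac{1}{\log n}\Big),\]
as required.

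The only real obstacle is verifying that $G(x)$ admits the analytic continuation into a $\Delta$-domain at $1$ required by singularity analysis, and that the expansion above is valid uniformly on the enlarged contour. This is routine: the integrand $1/(1-\log(1-u))$ defining $A(x)$ is analytic on $\mathbb{C} \setminus [1,\infty)$, the integration-by-parts expansion of $A(x)$ extends to complex $x$ avoiding $[1,\infty)$, and the factor $1-x$ in the denominator of $G(x)$ produces no additional singularities. Once these analyticity and uniformity checks are noted, the remainder of the proof is a direct application of standard singularity-analysis transfers.
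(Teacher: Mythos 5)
Your proposal follows the paper's proof essentially step for step: substitute the near-singularity expansion of $A(x)$ into $G(x) = \frac{2A(x)-x}{1-x}$, then transfer the resulting expansion to coefficient asymptotics via singularity analysis, using $[x^n]F(x) = 1/n$ to convert to the expectation. The remark on $\Delta$-analyticity is a helpful explicit note of a point the paper treats silently.

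One small imprecision worth flagging: you assert that the $\Oh\big(1/|\log(1-x)|\big)$ error term transfers to coefficients of order $\Oh\big(1/(n\log^2 n)\big)$. That is the exact asymptotic for the explicit function $1/\log(1-x)$ itself, but the $\Oh$-transfer theorem (Flajolet--Sedgewick, Theorem~VI.3) for a bound $f(z) = \Oh\big((1-z)^{-\alpha}(\log\tfrac{1}{1-z})^{\beta}\big)$ only yields $[z^n]f = \Oh\big(n^{\alpha-1}(\log n)^{\beta}\big)$, which with $\alpha=0$, $\beta=-1$ gives $\Oh\big(1/(n\log n)\big)$, not $\Oh\big(1/(n\log^2 n)\big)$. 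This does not affect the conclusion, since $n\cdot\Oh\big(1/(n\log n)\big) = \Oh(1/\log n)$ is exactly the claimed error term; but the intermediate estimate should be stated with the weaker (and actually justified) exponent.
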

Since $2G-1 \approx 0.192694$, we infer that approximately $19.3\%$ of the spectrum of a large random recursive tree consist of the eigenvalue 0. In the same way, we can also deal with the variance. Since
\[H(x) = \frac{(2G-1)^2x}{(1-x)^2} + \frac{K_1}{1-x} + \Oh \Big( \Big| \frac{1}{(1-x)\log^2(1-x)} \Big| \Big),\]
we have
\[\Ex (N_0^2(\Rec_n)) = \frac{[x^n] H(x)}{[x^n] F(x)} = n [x^n] H(x) = (2G-1)^2n^2 + K_1n + \Oh \Big( \frac{n}{\log^2 n} \Big).\]
So we obtain the following final result on the variance.

\begin{prop}
The variance of the multiplicity of the eigenvalue $0$ of a random recursive tree with $n$ vertices is
\[\Var (N_0(\Rec_n)) = K_1n + \Oh \Big( \frac{n}{\log n} \Big),\]
with $K_1$ as defined in~\eqref{eq:K1}.
\end{prop}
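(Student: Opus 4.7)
The plan is to simply combine the two asymptotic estimates already derived in the paragraphs immediately preceding the proposition via the identity $\Var(X) = \Ex(X^2) - (\Ex X)^2$. From the expression
\[H(x) = \frac{(2G-1)^2 x}{(1-x)^2} + \frac{K_1}{1-x} + \Oh\!\Big(\Big|\frac{1}{(1-x)\log^2(1-x)}\Big|\Big)\]
and singularity analysis applied to each summand, one reads off
\[\Ex(N_0^2(\Rec_n)) = (2G-1)^2 n^2 + K_1 n + \Oh\!\Big(\frac{n}{\log^2 n}\Big),\]
while the corresponding formula for the mean reads $\Ex(N_0(\Rec_n)) = (2G-1)n + \Oh(1/\log n)$.

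Next I would square the mean. Writing $\Ex(N_0(\Rec_n)) = (2G-1)n + R_n$ with $R_n = \Oh(1/\log n)$, one has
\[\bigl(\Ex(N_0(\Rec_n))\bigr)^2 = (2G-1)^2 n^2 + 2(2G-1)n R_n + R_n^2 = (2G-1)^2 n^2 + \Oh\!\Big(\frac{n}{\log n}\Big),\]
where the dominant error is the cross term. Subtracting this from the expression for $\Ex(N_0^2(\Rec_n))$ causes the leading quadratic $(2G-1)^2 n^2$ to cancel, leaving
\[\Var(N_0(\Rec_n)) = K_1 n + \Oh\!\Big(\frac{n}{\log^2 n}\Big) + \Oh\!\Big(\frac{n}{\log n}\Big) = K_1 n + \Oh\!\Big(\frac{n}{\log n}\Big),\]
as claimed.

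The only genuinely delicate point, and the reason the error term in the proposition is $\Oh(n/\log n)$ rather than $\Oh(n/\log^2 n)$, is that the cross term $2(2G-1)n R_n$ in the squared mean is only controlled to order $n/\log n$; this dominates the cleaner $\Oh(n/\log^2 n)$ error coming from $\Ex(N_0^2(\Rec_n))$. Apart from noting this, the step is a routine bookkeeping of error terms, so I would present it compactly: record the two asymptotic formulas, perform the subtraction, and verify that the leading terms cancel exactly. No further analytic input is needed beyond what has been established about $A(x)$, $G(x)$, $H(x)$, and the convergence of the integral defining $K_1$.
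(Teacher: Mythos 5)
Your proof is correct and follows the same route the paper takes: read off $\Ex(N_0^2(\Rec_n))$ from the singular expansion of $H(x)$, then subtract the square of the mean. The paper leaves that last subtraction implicit; you have filled it in and correctly identified that the $\Oh(n/\log n)$ error in the stated variance is forced by the cross term $2(2G-1)n\,R_n$ from squaring the mean, not by the cleaner $\Oh(n/\log^2 n)$ error in the second moment.
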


The integral representation for $K_1$ can be simplified using integration by parts. There are many different ways to represent $K_1$, for example
\begin{align*}
    K_1 &= 4G(G-1) + 8 \int_0^1 \frac{\log(1-\log w)}{(1-\log w)^2} dw \\
    &= 4(G-1)(G-2) - 4\int_0^{\infty} \log^2(1+u) e^{-u}\,du \\
	&\approx 0.138629.
\end{align*}

\subsection{Binary increasing trees}

Now we consider the multiplicity of $0$ in the spectrum of binary increasing trees. The approach is very similar to recursive trees.

A binary increasing tree has the property that each vertex has 2 possible places to which a child can be attached, that is, a left or right child. In view of that, to attach a node labelled $n$ to an existing binary increasing tree of order $n-1$, there are $n$ possible places to do so. It follows that there are $n!$ different binary increasing trees with $n$ vertices. 

Let $\BT$ be the set of all binary increasing trees. As in the previous subsection, we denote the exponential generating function by $Y(x)$. In view of the recursive decomposition into root, left subtree and right subtree, we have
\[Y'(x) = (1+Y(x))^2, \quad \text{where  } Y(0)=0,\]
see again Section~\ref{sec:increasing}. This differential equation has the explicit solution $Y(x) = \frac{x}{1-x}$, in agreement with our earlier observation that there are $n!$ binary increasing trees with $n$ vertices.

As in the case of recursive trees, we can now define a bivariate exponential generating function $Y(x,t)$ in the following way:
\[Y(x,t) = \sum _{T \in \BT} e^{tN_0(T)}\frac{x^{|T|}}{|T|!},\] where the sum is over the set $\BT$ of all binary increasing  trees. Of course, we have $Y(x,0) = Y(x) = \frac{x}{1-x}$.

Now we exploit the recursive characterisation of $n_0$ in~\eqref{eq:n0rec} again, by dividing the set of all binary increasing trees into the sets $\BT_+$ and $\BT_-$ of trees satisfing $n_0(T) = 1$ and $n_0(T) = -1$, respectively. We also let $Y_+(x,t)$ and $Y_-(x,t)$ be the respective associated exponential generating functions again. The recursion~\eqref{eq:n0rec} now gives us the system of equations
\begin{align*}
\frac{\partial}{\partial x} Y_+(x,t) &= e^t \big(1 + Y_{-}(x,t) \big)^2, \\
\frac{\partial}{\partial x} Y_-(x,y) &= e^{-t} \big(1+ Y(x,t)\big)^2 - e^{-t} \big(1 + Y_-(x,t)\big)^2.
\end{align*}
Again, we cannot find an explicit solution for the bivariate generating function, but we can determine the partial derivatives at $0$. We use the same abbreviations as in the previous section, namely $F(x) = Y(x,0) = \frac{x}{1-x}$, $F_{\pm}(x) = Y_{\pm}(x,0)$ as well as \[G(x) = \frac{\partial}{\partial t} Y(x,t) \Big|_{t=0},\ G_{\pm}(x) = \frac{\partial}{\partial t} Y_{\pm}(x,t) \Big|_{t=0}\] and \[H(x) = \Big( \frac{\partial}{\partial t} \Big)^2 Y(x,t) \Big|_{t=0},\ H_{\pm}(x) = \Big( \frac{\partial}{\partial t} \Big)^2 Y_{\pm}(x,t) \Big|_{t=0}.\]
We start again by determining $F_+(x)$ and $F_-(x)$. Setting $t=0$ in the system of differential equations, we obtain
\begin{align*}
F'_+(x) &= \big(1 + F_-(x) \big)^2, \\
F'_-(x) &= \big(1 + F(x) \big)^2 - \big(1 + F_-(x) \big)^2.
\end{align*}
Now $F_-(x) = F(x) - F_+(x) = \frac{x}{1-x} - F_+(x)$, thus
\[F'_+(x) = \Big( \frac{1}{1-x} - F_+(x) \Big)^2.\]
This Riccati-type differential equation (with the initial value $F_+(0) = 0$) has the explicit solution
\[F_+(x) = \frac{3-\sqrt{5}}{2(1-x)} + \frac{2\sqrt{5}}{(1-x)(2-(7+3\sqrt{5})(1-x)^{-\sqrt{5}})}.\]
Consequently, since $F_+(x)+F_-(x) = F(x) = \frac{x}{1-x}$,
\[F_-(x) = \frac{\sqrt{5}-1}{2(1-x)} - \frac{2\sqrt{5}}{(1-x)(2-(7+3\sqrt{5})(1-x)^{-\sqrt{5}})} - 1.\]
Now we take the first derivative with respect to $t$ again. Differentiating the system of differential equations and plugging in $t=0$ gives
\begin{align*}
G_+'(x) &= F_+'(x) + 2(1+F_-(x))G_-(x), \\
G_-'(x) &= -F_-'(x) + 2(1+F(x))G(x) - 2(1+F_-(x))G_-(x).
\end{align*}
Adding the two yields
\[G'(x) = F_+'(x) - F_-'(x) + 2(1+F(x))G(x).\]
Since $1+F(x) = \frac{1}{1-x}$, we can solve this differential equation by multiplying by the integrating factor $(1-x)^2$ and integrating, which results in
\begin{align*}
G(x) &= \frac{1}{(1-x)^2} \int_0^x (1-u)^2 \big( F_+'(u) - F_-'(u) \big) \,du \\
&= \frac{1}{(1-x)^2} \int_0^x \frac{(8+4\sqrt{5})(1-u)^{2\sqrt{5}}+(84+36\sqrt{5})(1-u)^{\sqrt{5}}-(22+10\sqrt{5})}{(7+3\sqrt{5}-2(1-u)^{\sqrt{5}})^2} \,du \\
&= \frac{1}{(1-x)^2} \int_0^1 \frac{(8+4\sqrt{5})v^{2\sqrt{5}}+(84+36\sqrt{5})v^{\sqrt{5}}-(22+10\sqrt{5})}{(7+3\sqrt{5}-2v^{\sqrt{5}})^2} \,dv \\
&\quad - \frac{1}{(1-x)^2} \int_0^{1-x} \frac{(8+4\sqrt{5})v^{2\sqrt{5}}+(84+36\sqrt{5})v^{\sqrt{5}}-(22+10\sqrt{5})}{(7+3\sqrt{5}-2v^{\sqrt{5}})^2} \,dv.
\end{align*}
The integrand has the expansion
\[(2-\sqrt{5}) + (50-22\sqrt{5})v^{\sqrt{5}} + \Oh \big(|v|^{2\sqrt{5}}\big),\]
so it follows immediately that
\[G(x) = \frac{C_1}{(1-x)^2} - \frac{2-\sqrt{5}}{1-x} + \Oh \big( |1-x|^{\sqrt{5}-1} \big),\]
where
\[C_1 = \int_0^1 \frac{(8+4\sqrt{5})v^{2\sqrt{5}}+(84+36\sqrt{5})v^{\sqrt{5}}-(22+10\sqrt{5})}{(7+3\sqrt{5}-2v^{\sqrt{5}})^2} \,dv.\]
Singularity analysis now shows that
\[\Ex(N_0(\Bin_n)) = \frac{[x^n] G(x)}{[x^n] F(x)} = C_1(n+1) + \sqrt{5}-2 + \Oh \big( n^{-\sqrt{5}} \big).\]
Let us state this explicitly:
\begin{prop}
The mean multiplicity of the eigenvalue $0$ of a random binary increasing tree with $n$ vertices is
\[\Ex (N_0(\Bin_n)) = C_1(n+1) + \sqrt{5}-2 + \Oh \Big( n^{-\sqrt{5}} \Big),\]
where $C_1 \approx 0.085753$.
\end{prop}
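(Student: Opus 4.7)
The plan is to carry out the generating function calculation that was already partially set up in the paragraphs preceding the proposition, paralleling the recursive trees case. I would set up the bivariate EGF $Y(x,t) = Y_+(x,t) + Y_-(x,t)$ splitting according to the sign of $n_0$, and use the recursive characterisation~\eqref{eq:n0rec} together with the left/right child decomposition of binary increasing trees to derive the PDE system
\begin{align*}
\partial_x Y_+ &= e^t(1+Y_-)^2, \\
\partial_x Y_- &= e^{-t}(1+Y)^2 - e^{-t}(1+Y_-)^2.
\end{align*}
Setting $t=0$ reduces this to the Riccati equation $F_+'(x) = (1/(1-x)-F_+(x))^2$ with $F_+(0)=0$, which can be solved explicitly (yielding the closed-form expressions for $F_\pm$ quoted in the excerpt).

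Next I would differentiate the PDE system once in $t$ and set $t=0$ to obtain the linear system for $G_\pm$. Adding the two equations eliminates $G_\pm$ on the right and leaves
\[G'(x) = F_+'(x) - F_-'(x) + 2(1+F(x))\, G(x) = \frac{2}{1-x} G(x) + \big(F_+'(x)-F_-'(x)\big).\]
Using the integrating factor $(1-x)^2$, the solution with $G(0)=0$ is
\[G(x) = \frac{1}{(1-x)^2} \int_0^x (1-u)^2\big(F_+'(u)-F_-'(u)\big)\,du,\]
and substituting the explicit $F_\pm$ gives the integrand written out in the excerpt. After the change of variables $v = 1-u$, the integrand becomes a rational function of $v^{\sqrt{5}}$ whose expansion near $v = 0$ begins $(2-\sqrt{5}) + (50-22\sqrt{5})v^{\sqrt{5}} + O(v^{2\sqrt{5}})$.

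To pass from $G(x)$ to the asymptotics, I would split $\int_0^x = \int_0^1 - \int_x^1$, identify the constant piece as $C_1$, and use the small-$v$ expansion of the integrand to show
\[\int_x^1(\cdots)\,dv = (2-\sqrt{5})(1-x) + O\bigl(|1-x|^{1+\sqrt{5}}\bigr),\]
yielding $G(x) = C_1(1-x)^{-2} - (2-\sqrt{5})(1-x)^{-1} + O(|1-x|^{\sqrt{5}-1})$. Standard singularity analysis (Flajolet--Sedgewick, Chapter VI, applicable since $x=1$ is the unique dominant singularity) then gives $[x^n] G(x) = C_1(n+1) - (2-\sqrt{5}) + O(n^{-\sqrt{5}})$. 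Since $F(x) = x/(1-x)$ gives $[x^n]F(x) = 1$ for $n\geq 1$, the ratio formula $\Ex(N_0(\Bin_n)) = [x^n]G(x)/[x^n]F(x)$ yields the claim, and a numerical evaluation of the integral $C_1$ confirms $C_1 \approx 0.085753$.

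The main obstacle is not any single step in isolation but keeping track of the irrational singular exponents $\sqrt{5}$ throughout: verifying that the integrand's expansion near $v=0$ really starts with a constant (so that $C_1$ is well-defined), that the first correction is of order $v^{\sqrt{5}}$ (producing the $|1-x|^{\sqrt{5}-1}$ error after integration and division by $(1-x)^2$), and that singularity analysis applies uniformly in a $\Delta$-domain around $x=1$. Checking that the branch point of $(1-x)^{-\sqrt{5}}$ in the explicit formula for $F_+$ does not introduce any singularity of $G$ closer to the origin than $1$ is a small but necessary verification.
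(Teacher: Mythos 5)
Your proposal is correct and follows essentially the same route as the paper: the same PDE system for $Y_\pm(x,t)$, the same Riccati reduction at $t=0$, the same linear ODE for $G(x)$ solved with the integrating factor $(1-x)^2$, the same substitution $v = 1-u$ and local expansion $(2-\sqrt5) + (50-22\sqrt5)v^{\sqrt5} + \Oh(v^{2\sqrt5})$, the same identification of $C_1$, and the same application of singularity analysis to obtain $[x^n]G(x) = C_1(n+1) - (2-\sqrt5) + \Oh(n^{-\sqrt5})$ before dividing by $[x^n]F(x)=1$. The caveats you flag at the end (well-definedness of $C_1$, the $\sqrt5$ exponent in the error term, $\Delta$-analyticity, and verifying that the apparent singularity of $F_+$ where $2-(7+3\sqrt5)(1-x)^{-\sqrt5}=0$ lies outside the unit disk) are exactly the points the paper disposes of by appealing to closure properties of singularity analysis and the fact that $x=1$ is the unique dominant singularity; no genuinely different idea is involved.
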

So approximately $8.6\%$ of the spectrum of a large random binary increasing tree consist of the eigenvalue 0. It remains to consider the variance. For this purpose, we will need $G_-(x)$. Recall that $G_-(x)$ satisfies the linear differential equation
\begin{align*}
G_-'(x) &= -F_-'(x) + 2(1+F(x))G(x) - 2(1+F_-(x))G_-(x) \\
&= -F_-'(x) + \frac{2G(x)}{1-x} - 2(1+F_-(x))G_-(x).
\end{align*}
We can solve it by multiplying by the integrating factor
\[\exp \Big( \int 2(1+F_-(x))\,dx \Big) = \frac{\big(7+3\sqrt{5}-2(1-x)^{\sqrt{5}}\big)^2}{(1-x)^{\sqrt{5}-1}}\]
and integrating both sides. This gives us
\begin{equation}\label{eq:bin-G-}
G_-(x) = \frac{(1-x)^{\sqrt{5}-1}}{\big(7+3\sqrt{5}-2(1-x)^{\sqrt{5}}\big)^2} \int_0^x \frac{\big(7+3\sqrt{5}-2(1-u)^{\sqrt{5}}\big)^2}{(1-u)^{\sqrt{5}-1}} \Big(\frac{2G(u)}{1-u} -F_-'(u)\Big)\,du.
\end{equation}
Lastly, $H(x)$ is determined by first differentiating the system of equations for $Y_+(x,t)$ and $Y_-(x,t)$ twice with respect to $t$, plugging in $t=0$ and adding the equations, which yields
\[H'(x) = (1+F(x))^2 - 4(1+F(x))G(x) + 2G(x)^2 + 8(1+F_-(x)) G_-(x) + 2(1+F(x))H(x).\]
Here, the integrating factor is simply $(1-x)^2$ as in the differential equation for $G(x)$. This eventually gives us
\[H(x) = \frac{1}{(1-x)^2} \int_0^x \big(1-4(1-u)G(u) + 2(1-u)^2G(u)^2 + 8(1-u)^2(1 + F_-(u)) G_-(u)\big)\,du.\]
In order to apply singularity analysis, we need the expansion around the dominant singularity $x=1$. Recall that
\[G(x) = \frac{C_1}{(1-x)^2} - \frac{2-\sqrt{5}}{1-x} + \Oh \big( |1-x|^{\sqrt{5}-1} \big).\]
Moreover,
\[F_-(x) = \frac{\sqrt{5}-1}{2(1-x)} -1 + \Oh \big( |1-x|^{\sqrt{5}-1} \big).\]
Plugging these into the representation~\eqref{eq:bin-G-} for $G_-(x)$, we also find that
\[G_-(x) = \frac{C_1(\sqrt{5}-1)}{2(1-x)^2} - \frac{7-3\sqrt{5}}{2\sqrt{5}(1-x)} + \Oh(1).\]
We can now plug the asymptotic expansions of $G,F_-$ and $G_-$ into the integral representation for $H$, which shows that the integrand is
\[1-4(1-u)G(u) + 2(1-u)^2G(u)^2 + 8(1-u)^2(1 + F_-(u)) G_-(u) =  \frac{2C_1^2}{(1-u)^2} + \Oh(1).\]
In particular, it follows that the integral
\[C_2 = \int_0^1 \Big( 1-4(1-u)G(u) + 2(1-u)^2G(u)^2 + 8(1-u)^2(1 + F_-(u)) G_-(u) - \frac{2C_1^2}{(1-u)^2} \Big) \,du\]
is convergent, and we obtain
\[H(x) = \frac{2C_1^2x}{(1-x)^3} + \frac{C_2}{(1-x)^2} + \Oh \Big( \frac{1}{|1-x|} \Big). \]
Singularity analysis gives us the asymptotic behaviour of the second moment:
\[\Ex(N_0^2(\Bin_n)) = \frac{[x^n] H(x)}{[x^n] F(x)} = C_1^2n(n+1) + C_2n + \Oh(1),\]
so finally
\[\Var(N_0(\Bin_n)) = (4C_1-2\sqrt{5}C_1-C_1^2+C_2)n + \Oh(1).\]

\begin{prop}
The variance of the multiplicity of the eigenvalue $0$ of a random binary increasing tree with $n$ vertices is
\[\Var (N_0(\Bin_n)) = K_2n + \Oh(1),\]
with $K_2 = (4C_1-2\sqrt{5}C_1-C_1^2+C_2) \approx 0.057162$.
\end{prop}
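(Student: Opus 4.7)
The plan is to derive the variance directly from the two moment asymptotics already established in the paragraphs immediately preceding the statement. Concretely, I have at my disposal
\[
\Ex(N_0(\Bin_n)) = C_1(n+1) + \sqrt{5} - 2 + \Oh(n^{-\sqrt{5}})
\]
and
\[
\Ex(N_0^2(\Bin_n)) = C_1^2 n(n+1) + C_2 n + \Oh(1),
\]
both obtained by applying singularity analysis to $G(x)$ and $H(x)$ respectively. The proof then amounts to forming $\Var(N_0(\Bin_n)) = \Ex(N_0^2(\Bin_n)) - \Ex(N_0(\Bin_n))^2$ and verifying that the quadratic-in-$n$ contributions cancel, leaving the announced coefficient $K_2$ at order $n$.

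For the execution, I first square the expansion of the mean. The leading piece is $C_1^2(n+1)^2$, the cross terms produce $2(\sqrt{5}-2)C_1(n+1)$, and the constant $(\sqrt{5}-2)^2$ is absorbed into the error. The $\Oh(n^{-\sqrt{5}})$ remainder, after being multiplied by the linear main term $C_1 n$, becomes $\Oh(n^{1-\sqrt{5}}) = o(1)$ since $\sqrt{5} > 2$, so it also disappears into $\Oh(1)$. Subtracting from the second-moment expansion gives
\[
\Var(N_0(\Bin_n)) = \bigl(C_1^2 n(n+1) - C_1^2(n+1)^2\bigr) + C_2 n - 2(\sqrt{5}-2)C_1(n+1) + \Oh(1).
\]
The bracketed quadratic collapses to $-C_1^2(n+1)$, and collecting coefficients of $n$ yields $-C_1^2 + C_2 + 2(2-\sqrt{5})C_1 = 4C_1 - 2\sqrt{5}C_1 - C_1^2 + C_2 = K_2$, exactly as claimed.

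The only nontrivial obstacle does not lie in this final algebra but rather in the singular expansion of $H(x)$ that is fed into it. Specifically, one must check that the integrand
\[
1 - 4(1-u)G(u) + 2(1-u)^2 G(u)^2 + 8(1-u)^2 (1+F_-(u)) G_-(u)
\]
admits an expansion of the form $2C_1^2/(1-u)^2 + \Oh(1)$ near $u=1$; this is a delicate cancellation between the $(1-u)^{-3}$ singularities coming from $G(u)^2$ and from $(1+F_-(u))G_-(u)$, which is ensured by the precise leading coefficients in the expansions of $G$, $F_-$, and $G_-$ derived earlier via the integrating factor method. Once this cancellation is in place, the integral defining $C_2$ converges, and the standard Flajolet--Sedgewick transfer theorems \cite[Chapter VI]{FlajoletRobert} deliver the second-moment asymptotic, from which the variance follows by the short manipulation above.
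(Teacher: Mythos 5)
Your proposal is correct and follows essentially the same route as the paper: use the singular expansions of $G(x)$ and $H(x)$ (the latter hinging on the cancellation of the $(1-u)^{-3}$ poles in the integrand, as you note) to read off the first two moments via singularity analysis, then combine them with the algebra you carried out to isolate the coefficient $K_2 = 4C_1 - 2\sqrt{5}C_1 - C_1^2 + C_2$. The algebraic cancellation of the $n^2$ terms and the handling of the $\Oh(n^{1-\sqrt{5}})$ remainder are both correct.
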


\subsection{Connections to other graph parameters}

It is well known that the characteristic polynomial of the adjacency matrix of a forest coincides with its matching polynomial (a special case of a general result due to Sachs, see~\cite[Theorem 1.3]{CDSSpectra}), which in turn implies that the multiplicity of $0$ as an eigenvalue is given by $N_0(T) = |T|-2m(T)$, where $m(T)$ is the matching number (the greatest cardinality of a matching) of $T$. Moreover, it is a known consequence of König's theorem that independence number (i.e., the greatest cardinality of an independent set) and matching number of any bipartite graph (thus in particular any tree) add up to the order of the graph. Consequently, we can express the independence number $i(T)$ of a tree $T$ in terms of the multiplicity of $0$ as well: $i(T) = |T|-m(T) = \frac12 (N_0(T) + |T|)$.

Therefore, we obtain central limit theorems for independence number and matching number as simple corollaries: with the exception of the variance constants, these have been proven before \cite{Fuchs2021note,Janson2020independence}, and the mean for recursive trees has been determined even earlier \cite{Meir1975expected}.

\begin{cor}
The independence number and matching number of a random recursive tree with $n$ vertices satisfy the following central limit theorem:
$$\frac{i(\Rec_n) - c_{\rec} n}{s_{\rec}\sqrt{n}} \xrightarrow{d} \mathcal{N}(0,1) \text{ and } \frac{m(\Rec_n) - (1-c_{\rec}) n}{s_{\rec}\sqrt{n}} \xrightarrow{d} \mathcal{N}(0,1),$$
where $c_{\rec} = G \approx 0.596347$ and $s_{\rec} = \frac{K_1}{2} \approx 0.069315$.

Likewise, the independence number and matching number of a random binary increasing tree with $n$ vertices satisfy the following central limit theorem:
$$\frac{i(\Bin_n) - c_{\bin} n}{s_{\bin}\sqrt{n}} \xrightarrow{d} \mathcal{N}(0,1) \text{ and } \frac{m(\Bin_n) - (1-c_{\bin}) n}{s_{\bin}\sqrt{n}} \xrightarrow{d} \mathcal{N}(0,1),$$
where $c_{\bin} = \frac{1+C_1}{2} \approx 0.542876$ and $s_{\bin} = \frac{K_2}{2} \approx 0.028581$.
\end{cor}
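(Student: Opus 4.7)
The plan is to reduce both statements to the central limit theorem for $N_0$ that was already established (Theorem~\ref{thm:mult-CLT} together with the explicit mean and variance computed in the previous propositions). The key point is that the discussion opening this subsection expresses $i(T)$ and $m(T)$ as deterministic affine functions of $N_0(T)$: from $N_0(T) = |T|-2m(T)$ (the matching-polynomial identity, valid for any forest) combined with König's theorem $i(T)+m(T)=|T|$ (valid for any bipartite graph, hence for trees), one obtains
\[m(T) = \tfrac{1}{2}\bigl(|T|-N_0(T)\bigr), \qquad i(T) = \tfrac{1}{2}\bigl(|T|+N_0(T)\bigr).\]
Since $|\Rec_n|=|\Bin_n|=n$ is deterministic, $i$ and $m$ are affine functions of $N_0$ alone on trees of a given size.

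Next I would simply transport mean, variance and limiting distribution through this linear change of variables. For recursive trees the propositions give $\Ex(N_0(\Rec_n)) = (2G-1)n+\Oh(1/\log n)$ and $\Var(N_0(\Rec_n)) = K_1 n+\Oh(n/\log n)$, hence
\[\Ex(i(\Rec_n)) = \tfrac{1}{2}\bigl(n+(2G-1)n\bigr)+\Oh(1/\log n) = Gn+\Oh(1/\log n),\]
so $c_{\rec}=G$, while $m(\Rec_n) = n-i(\Rec_n)$ yields mean $(1-G)n$. The variances of $i(\Rec_n)$ and $m(\Rec_n)$ are both equal to $\tfrac{1}{4}\Var(N_0(\Rec_n))$, producing the claimed normalising constant. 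For binary increasing trees the same calculation with the constants $C_1$ and $K_2$ in place of $2G-1$ and $K_1$ gives $c_{\bin}=\tfrac{1+C_1}{2}$ and the corresponding $s_{\bin}$.

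Finally, the distributional convergence is immediate: Theorem~\ref{thm:mult-CLT} applied to $\alpha=0$ yields $(N_0(\Rec_n)-\mu_{\rec,0}n)/\sqrt n \xrightarrow{d} \mathcal{N}(0,\sigma^2_{\rec,0})$ with $\mu_{\rec,0}=2G-1$ and $\sigma^2_{\rec,0}=K_1$. Writing
\[\frac{i(\Rec_n)-Gn}{\sqrt n} = \frac{1}{2}\cdot\frac{N_0(\Rec_n)-(2G-1)n}{\sqrt n}\]
and applying the continuous mapping theorem (multiplication by the constant $1/2$) gives convergence to $\mathcal{N}(0,K_1/4)$, which after the stated normalisation by $s_{\rec}\sqrt n$ becomes $\mathcal{N}(0,1)$; the case of $m(\Rec_n)=n-i(\Rec_n)$ differs only by a sign. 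The argument for $\Bin_n$ is identical. There is really no obstacle here beyond bookkeeping of constants; the only subtlety worth noting is that the $\Oh$-error terms in the mean estimates are of lower order than $\sqrt n$, so they do not affect the limiting distribution after centring and dividing by $\sqrt n$.
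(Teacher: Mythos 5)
Your reduction is exactly the one the paper intends: since $N_0(T)=|T|-2m(T)$ (matching polynomial identity for forests) and $i(T)+m(T)=|T|$ (König), one has $i(T)=\tfrac12\bigl(|T|+N_0(T)\bigr)$ and $m(T)=\tfrac12\bigl(|T|-N_0(T)\bigr)$, so for $|T|=n$ fixed these are affine functions of $N_0$, and the CLT and moment asymptotics for $N_0$ transport directly. The identification $c_{\rec}=\tfrac12(1+(2G-1))=G$ and $c_{\bin}=\tfrac12(1+C_1)$ is correct.

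However, the final step where you write that $\mathcal{N}(0,K_1/4)$ ``after the stated normalisation by $s_{\rec}\sqrt n$ becomes $\mathcal{N}(0,1)$'' is asserted but not actually checked, and it does not go through with the stated constant. You correctly obtain $\Var(i(\Rec_n))\sim\tfrac{K_1}{4}n$, so $\frac{i(\Rec_n)-Gn}{\sqrt n}\xrightarrow{d}\mathcal{N}(0,K_1/4)$. Dividing further by $s_{\rec}=K_1/2$ gives a limiting variance of $\frac{K_1/4}{(K_1/2)^2}=\frac{1}{K_1}\approx 7.21$, not $1$. For a standard normal limit the normalising constant must be $s_{\rec}=\sqrt{K_1}/2\approx 0.186165$ (and likewise $s_{\bin}=\sqrt{K_2}/2\approx 0.11954$), whereas the paper's stated values $K_1/2\approx 0.069315$ and $K_2/2\approx 0.028581$ do not produce a variance-$1$ limit. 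This looks like a slip in the paper's corollary (confusing the variance $K_1/4$ with a standard deviation, or similar); you should have noticed the mismatch rather than declaring that it works out. Your derivation of the mean and variance asymptotics themselves is correct, and correcting $s_{\rec}$, $s_{\bin}$ to $\sqrt{K_1}/2$, $\sqrt{K_2}/2$ completes the argument.
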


\section{Other eigenvalues}

For other eigenvalues, determining the mean and variance constants numerically is substantially more difficult as the recursive characterisation of the toll function is no longer available. It was remarked earlier that the mean constants can be determined from the convergent series
\[\mu_{\rec,\alpha} = \sum_{k=1}^{\infty} \frac{\Ex (n_{\alpha}(\Rec_k))}{k(k+1)} \text{ and } \mu_{\bin,\alpha} = \sum_{k=1}^{\infty} \frac{2 \Ex (n_{\alpha}(\Bin_k))}{(k+1)(k+2)}.\]
However, the convergence of these series is rather slow, and the numerators are not easy to compute either. To illustrate this, let us consider the example of the eigenvalue $1$ (or, by symmetry, the eigenvalue $-1$).

\medskip

By means of a computer program, we were able to compute the values $\Ex (n_{1}(\Rec_k))$ explicitly for $k \leq 30$. We have
\[\sum_{k=1}^{30} \frac{\Ex (n_1(\Rec_k))}{k(k+1)} \approx 0.048771.\]
However, the only a-priori estimate that we have to deal with the tail of the series is $|\Ex (n_1(\Rec_k))| \leq 1$, so
\[\Big| \sum_{k=31}^{\infty} \frac{\Ex (n_1(\Rec_k))}{k(k+1)} \Big| \leq \sum_{k=31}^{\infty} \frac{1}{k(k+1)} = \frac{1}{31},\]
which means that we only get the very weak bound $\mu_{\rec,1}  \in [0.016512,0.081029]$. By extra\-polation from the available numerical data, we obtain heuristically that $\mu_{\rec,1}$ is about $0.03$. It would certainly be desirable to have better methods to calculate the constants in our central limit theorems numerically.

\bibliographystyle{abbrv}
\bibliography{increasing_trees}

\end{document}